\documentclass{article}
\usepackage[utf8]{inputenc}
\usepackage{amsmath,amsthm,amssymb,amsfonts}
\usepackage[a4paper, total={6in, 8in}]{geometry}
\usepackage{hyperref,color}
\usepackage{graphicx}
\usepackage[title]{appendix}
\usepackage[normalem]{ulem}

\newcommand{\bN}{ {\mathbb  N}}

\newcommand{\bC}{ {\mathbb  C}}

\newcommand{\ie}{{\it i.e.}}



\newtheorem{theorem}{Theorem}[section]

\newtheorem{lemma}[theorem]{Lemma}

\newtheorem{remark}[theorem]{Remark}

\title{Convergent expansions and bounds for the incomplete elliptic integral of the second kind near the logarithmic singularity}
\author{Dmitrii Karp \thanks{Holon Institute of Technology, Holon, Israel. Email: dimkrp@gmail.com}, Yi Zhang \thanks{Corresponding author. Department of Foundational  Mathematics, School of Science, Xi'an Jiaotong-Liverpool University, 
 Suzhou, 215123, China. The work of Y. Zhang was supported by XJTLU Research Development Fund No.\ RDF-20-01-12, the NSFC Young Scientist Fund No.\ 12101506 and the Natural Science Foundation of the Jiangsu Higher Education Institutions of China No.\ 21KJB110032. 
 Email: Yi.Zhang03@xjtlu.edu.cn}}
\date{\today}

\begin{document}

\maketitle

\begin{abstract}
We find two  series expansions for Legendre’s second incomplete elliptic integral $E(\lambda, k)$ in terms of recursively computed
elementary functions. Both expansions converge at every point of the unit square in the  $(\lambda, k)$ plane. Partial sums of the proposed expansions form a sequence of approximations to $E(\lambda,k)$  which are asymptotic when $\lambda$ and/or $k$ tend to unity, including when both approach the  logarithmic singularity $\lambda=k=1$ 
from any direction. Explicit two-sided  error bounds are given at each approximation order. These bounds yield a sequence of increasingly precise asymptotically correct two-sided inequalities for $E(\lambda, k)$. For the reader’s convenience we further present
explicit expressions for low-order approximations and numerical examples to illustrate their accuracy. Our derivations are based on
series rearrangements, hypergeometric summation algorithms and extensive use of the properties of the generalized  hypergeometric
functions including some recent inequalities. 
\end{abstract}

\bigskip

Keywords: \emph{Legendre's elliptic integrals, incomplete elliptic integral of the second kind, asymptotic approximation, two-sided bounds, hypergeometric function, symbolic computation, symmetric elliptic integrals}

\bigskip

MSC2020: 33E05, 33F10, 33C20, 33C60,

\section{Introduction} \label{SEC:intro}

Legendre's second elliptic integral (EI) is defined by \cite[(2.2)]{Carlson1961}
\begin{equation}\label{eq:E-defined}
E(\lambda,k)=\int_{0}^{\lambda}\frac{\sqrt{1-k^2t^2}}{\sqrt{1-t^2}}dt.
\end{equation}
It can be expressed in terms of Appell's hypergeometric function \cite[(2.7)]{Carlson1961}
$$
F_1(\alpha;\beta,\beta';\gamma;x,y)=\sum\limits_{m,n=0}^{\infty}\frac{(\alpha)_{m+n}(\beta)_{m}(\beta')_{n}}{(\gamma)_{m+n}}\frac{x^my^n}{m!n!}
$$
as follows  \cite[(2.9)]{Carlson1961}
\begin{equation}\label{eq:E-Appell}
E(\lambda,k)={\lambda}F_1(1/2;1/2,-1/2;3/2;\lambda^2,k^2\lambda^2).
\end{equation}
The double series defining $F_1$ converges in the domain $|\lambda^2|<1$, $|k^2\lambda^2|<1$ in the space $\mathbb{C}^2$ of the complex variables $(\lambda,k)$ and defines an analytic function there. Clearly, the bi-disk $|\lambda|<1$, $|k|<1$ is (properly) contained in the convergence domain. The function $F_1$ in \eqref{eq:E-Appell} can be analytically continued to the domain 
$$
\left\{|k|<1, |k\lambda|>1, |\arg(-\lambda^2)|<\pi, |\arg(-k^2\lambda^2)|<\pi  \right\}
$$
according to \cite[Proposition~5]{Bezrodnykh2017}  and further to $|k|>1$, $|\lambda|>1$ and the same restrictions on the arguments via the reflection relation \cite[(19.7.4)]{NIST}
$$
kE(\lambda,1/k)=E(\lambda/k,k) - (1-k^2)F(\lambda/k,k),
$$
where 
$$
F(\lambda,k)=\int_0^{\lambda}\frac{dt}{\sqrt{(1-t^2)(1-k^2t^2)}}={\lambda}F_1(1/2;1/2,1/2;3/2;\lambda^2,k^2\lambda^2)
$$
is the first Legendre's incomplete elliptic integral \cite[(2.8)]{Carlson1961}.
Note that $E(\lambda,1)=\lambda$ for each $0<\lambda<1$, while $E(1,k)=E(k)$, which is  the complete elliptic integral of the second kind.  Expansions for $F(\lambda,k)$ analogous to those derived in this paper for $E(\lambda,k)$ were found by the first author jointly with S.M.\:Sitnik in \cite{Dima2007}.

The following two symmetric standard EIs are defined in~\cite{Carlson1961, Carlson1977, CarlsonGustaf1985, Carlson1994} as follows:
\begin{align*}
R_F(x, y, z) & = \frac{1}{2} \int_0^\infty \frac{dt}{\sqrt{(t + x) (t + y) (t + z)}}, \\
R_D(x, y, z) & = \frac{3}{2} \int_0^\infty \frac{dt}{(t + z) \sqrt{(t + x) (t + y) (t + z)}},
\end{align*}
and related to $E(\lambda, k)$ by \cite[(4.2)]{Carlson1979}
\begin{equation} \label{EQ:symmetrictoLegendre}
E(\lambda, k) = \lambda R_F(1 - \lambda^2, 1 - k^2 \lambda^2, 1) - \frac{1}{3} k^2 \lambda^3 R_D(1 - \lambda^2, 1 - k^2 \lambda^2, 1).  
\end{equation}
Asymptotic expansions for $E(\lambda, k)$ near the point $(1, 1)$ appeared in~\cite{Vel1969,Kaplan1948}. For symmetric elliptic integrals with one of the parameters going to infinity, the first (and the second in some cases) term of the asymptotic expansion of $R_F$, $R_D$, and $R_J$, as well as a quite accurate bounds for the remainder, have been obtained by Carlson and Gustafson \cite{Carlson1994}.  Moreover, for all the symmetric EIs, they also considered the case of several parameters going to infinity.  The first approximation of Carlson and Gustafson has been extended to the general zero-balanced Appell function $F_1$ by the first author in \cite{Dima2008}. Complete convergent expansions for symmetric EIs (and not only first terms) have been obtained earlier by Carlson using Mellin transform techniques \cite{Carlson1985}, but computation of the higher order terms is not at all  straightforward and the error bounds are not satisfactory \cite[Section~3]{Carlson1985}.  The complete asymptotic expansions with recursively computed terms and explicit error bounds at each approximation order were obtained by L\'{o}pez in \cite{Lopez2000, Lopez2001} for various asymptotic regimes.  Formula \eqref{EQ:symmetrictoLegendre} allows converting his results into the  asymptotic approximations for Legendre's EI  $E(\lambda, k)$ as $\lambda\to1$ (while $k$ is fixed or tends to $1$ as well).  Details of these conversion are given in the Appendix to this paper. The resulting approximation takes the form
\begin{equation} \label{EQ:CGexpansion1}
E(\lambda, k) = \lambda (1 - k^2 \lambda^2) \ln \frac{4}{\sqrt{1 - \lambda^2} + \sqrt{1 - k^2 \lambda^2}} + k^2 \lambda^3 + r_1,
\end{equation}
with the remainder $r_1$ satisfying \eqref{EQ:Lbound1} and \eqref{EQ:CGbound1}.  For a more comprehensive overview of the theories, algorithms, and applications of elliptic integrals, we refer to~\cite{Carlson2010, DLMF}.

In this paper we will derive two types of convergent  series directly for $E(\lambda,k)$ which are also asymptotic when either $\lambda$ or $k$ or both tend to $1$.  Each of the two series converges at each point of the unit square in the $(\lambda,k)$ plane.  Convergence is uniform on compact subsets of the closed unit square with one boundary segment removed ($\lambda=1$ ($k=1$) for the first (second) expansion).   We further furnish explicit two-sided error bounds at each approximation order.  Hence, our results can also be interpreted as a sequence of asymptotically precise (as $\lambda,k\to1$) two-sided inequalities for the second incomplete EI $E(\lambda,k)$.  Our derivation does not rely on asymptotic methods and uses standard analytic techniques combined with the algorithms of symbolic computation and some recent and rather accurate inequalities for the generalized hypergeometric function.   This leads to high-precision approximations which are much better than those present in the literature so far.  We demonstrate this numerically in the ultimate section of the paper. For example, our first order approximation is given by 
$$
E_1(\lambda, k)= (\lambda-1/\lambda)\sqrt{1 + (\lambda^2 (1 - k^2))/(1 - \lambda^2)} - \frac{1-k^2}{4}\ln\frac{1-\lambda}{1+\lambda} +1/\lambda.
$$
This approximation is also an upper bound.  We further propose a sequence of more precise refined approximations which do not constitute (neither upper nor lower) bounds. For instance, the first order refined approximation of the first kind is given  by
\begin{multline*}
\hat{E}_1(\lambda, k)= (\lambda-1/\lambda)\sqrt{1 + \frac{\lambda^2(1 - k^2)}{1 - \lambda^2}} - \frac{(101+19k^2)(1-k^2)}{32(7+8k^2)}\ln\frac{1-\lambda}{1+\lambda} +1/\lambda
\\
-\frac{675\sqrt{2}(1-k^2)^{3/2}}{128(7+8k^2)\sqrt{15-7\lambda^2-8\lambda^2k^2}} \ln\frac{\sqrt{15-7\lambda^2-8\lambda^2k^2}+\lambda\sqrt{8(1-k^2)}}{\sqrt{15-7\lambda^2-8\lambda^2k^2}-\lambda\sqrt{8(1-k^2)}}.
\end{multline*}
Table~\ref{TABLE:1} in Section~5 shows a remarkable accuracy of this approximation. 

The paper is organized as follows. In Section~2 succeeding this introduction we rederive two known series expansions for $E(\lambda,k)$ using partial fractions and the generating function for Legendre's polynomials and find new bounds for the remainders.  These expansions then serve as the starting points for new expansions established in Sections 3 and 4.  Both of them converge for any fixed $(\lambda,k)\in(0,1)\times(0,1)$. The partial sums of the first expansion derived in Section~3  form an asymptotic series as $k\to1$ which is uniform with respect to $\lambda$ lying in any subset of the unit square with bounded ratio $(1-k)/(1-\lambda)$.  In a similar fashion, the partial sums of the second expansion derived in Section~4 form an asymptotic series as $\lambda\to1$ which is uniform with respect to $k$ lying in any subset of the unit square with bounded ratio $(1-\lambda)/(1-k)$. In Section~5 we present the results of numerical experiments illustrating high precision of our first and second approximations and even more so for the refined approximations obtained by incorporating the error bounds into the formulas. Finally, we included a short appendix containing a conversion of the first approximations for the symmetric elliptic integrals due to Carlson-Gustafson \cite{Carlson1994} and L\'{o}pez \cite{Lopez2000}  and their error bounds into the corresponding results for the incomplete Legendre's second elliptic integral.


\section{Expansions of Byrd-Friedman and Carlson revisited} \label{SEC:carlson}

In this section  we present two auxiliary series expansions which will be the cornerstones for the main results given in Sections~\ref{SEC:first} 
and~\ref{SEC:sec}. The first expansion can be regarded as an equivalent form of a known expansion due to Byrd-Friedman~\cite{ByrdFriedman1971}, while the second one is 
derived from an expansion by Bille C.\ Carlson~\cite{Carlson1961} via certain hypergeometric transformations. The error bounds given in this section appear to be new. 

To deduce the first expansion, we need the following lemma. 

\begin{lemma} \label{LEM:first}
For an integer $j \geq 1$ and $0\le\lambda<1$, we have the following identity:
\begin{multline} 
\int_0^{\lambda} \frac{t^{2 j} dt}{(1 - t^2)^{j}}= 
\frac{\lambda^{2j+1}}{2j+1}{}_2F_{1}(j,j+1/2;j+3/2;\lambda^2)
\\
=\frac{\lambda^{2 j + 1}}{(1 - \lambda^2)^j} 
+ (-1)^{j} \frac{(1/2)_j}{(j - 1)!} \ln\frac{1 - \lambda}{1 + \lambda}
+  \frac{1}{\lambda} \sum_{n = 0}^{j - 1} (-1)^{n - 1} \frac{(1/2 - j)_n}{(1 - j)_n} \left(\frac{\lambda^2}{1 - \lambda^2} \right)^{j - n}, \label{EQ:first}
\end{multline}
where 
$$
(b)_0 = 1, \quad (b)_n = b (b + 1) (b + 2) \cdots (b + n - 1), \, n\ge1, 
$$
is the Pochhammer symbol (or the rising factorial).
\end{lemma}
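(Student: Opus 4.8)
The statement is a chain of two equalities, which I would establish separately; write $I_j(\lambda)$ for the integral on the left-hand side.

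For the first equality I would simply expand the integrand in a power series and integrate term by term. Since $0\le\lambda<1$, the Gauss binomial series $(1-t^2)^{-j}=\sum_{m\ge0}\frac{(j)_m}{m!}t^{2m}$ converges uniformly on $[0,\lambda]$, so
\[
I_j(\lambda)=\sum_{m\ge0}\frac{(j)_m}{m!}\int_0^\lambda t^{2j+2m}\,dt=\sum_{m\ge0}\frac{(j)_m}{m!}\frac{\lambda^{2j+2m+1}}{2j+2m+1}.
\]
The first equality then follows upon rewriting $\frac{1}{2j+2m+1}=\frac{1}{2j+1}\frac{(j+1/2)_m}{(j+3/2)_m}$, which is immediate from $(j+1/2)_m/(j+3/2)_m=(j+1/2)/(j+1/2+m)$, and recognizing the resulting series as $\frac{\lambda^{2j+1}}{2j+1}\,{}_2F_{1}(j,j+1/2;j+3/2;\lambda^2)$.

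The closed-form (second) equality is the substantive part. Conceptually it is the connection formula expressing a ${}_2F_1$ near its singularity $z=\lambda^2=1$: note that $c-a-b=(j+3/2)-j-(j+1/2)=1-j$ is a nonpositive integer for $j\ge1$, which is exactly the logarithmic case and accounts for the term $\ln\frac{1-\lambda}{1+\lambda}$ (recall the odd primitive $\int_0^\lambda(1-t^2)^{-1}dt=-\tfrac12\ln\frac{1-\lambda}{1+\lambda}$). Rather than invoke the degenerate connection formula directly, I would argue by induction on $j$. Integration by parts with $u=t^{2j-1}$ and $dv=t(1-t^2)^{-j}dt$ gives, for $j\ge2$, the reduction
\[
I_j(\lambda)=\frac{\lambda^{2j-1}}{2(j-1)(1-\lambda^2)^{j-1}}-\frac{2j-1}{2(j-1)}\,I_{j-1}(\lambda),
\]
while the base case $j=1$ is the elementary $I_1(\lambda)=-\lambda-\tfrac12\ln\frac{1-\lambda}{1+\lambda}$, which matches the right-hand side directly.

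It then remains to check that the proposed right-hand side, call it $R_j(\lambda)$, obeys the same recurrence (with $R_{j-1}$ in place of $I_{j-1}$) and the same base case; the claim $R_j=I_j$ follows by induction on $j$. The logarithmic part is easy: writing the coefficient of $\ln\frac{1-\lambda}{1+\lambda}$ in $R_j$ as $c_j=(-1)^j(1/2)_j/(j-1)!$, the relation $(1/2)_j=(1/2)_{j-1}(2j-1)/2$ yields at once $c_j=-\frac{2j-1}{2(j-1)}c_{j-1}$, exactly as the recurrence demands. The main obstacle is matching the rational part: setting
\[
\rho_j(\lambda)=\frac{\lambda^{2j+1}}{(1-\lambda^2)^j}+\frac1\lambda\sum_{n=0}^{j-1}(-1)^{n-1}\frac{(1/2-j)_n}{(1-j)_n}\left(\frac{\lambda^2}{1-\lambda^2}\right)^{j-n},
\]
one must verify the finite identity $\rho_j(\lambda)+\frac{2j-1}{2(j-1)}\rho_{j-1}(\lambda)=\frac{\lambda^{2j-1}}{2(j-1)(1-\lambda^2)^{j-1}}$. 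I expect this to reduce, after clearing the common powers of $\frac{\lambda^2}{1-\lambda^2}$, to a termwise Pochhammer-symbol recursion for the coefficients $\frac{(1/2-j)_n}{(1-j)_n}$. A convenient alternative that sidesteps the recurrence altogether is to start from the decomposition $\left(\frac{t^2}{1-t^2}\right)^j=\sum_{i=0}^j\binom{j}{i}(-1)^{j-i}(1-t^2)^{-i}$ and integrate each summand with the standard reduction formula for $\int(1-t^2)^{-i}dt$, then collect the rational contributions and the single arctanh term.
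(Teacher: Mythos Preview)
Your treatment of the first equality is correct and equivalent to what the paper does (invoking Euler's integral representation is just the reverse of your termwise integration).

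For the closed form you take a genuinely different route. The paper integrates by parts once with $u=(1-t^2)^{-j}$, $dv=t^{2j}\,dt$, manipulates the result into
\[
I_j(\lambda)=\frac{\lambda^{2j+1}}{(1-\lambda^2)^j}-2j\int_0^{\lambda}\frac{t^{2j}}{(1-t^2)^{j+1}}\,dt,
\]
and finishes by plugging in a known closed formula for the last integral from \cite[Lemma~1]{Dima2007}. Your approach integrates by parts in the opposite direction to obtain the downward recurrence $I_j=\frac{\lambda^{2j-1}}{2(j-1)(1-\lambda^2)^{j-1}}-\frac{2j-1}{2(j-1)}I_{j-1}$ and then inducts. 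This is more self-contained---no external lemma---but longer.

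The real gap in your sketch is the inductive step for the rational part: you do not verify
\[
\rho_j(\lambda)+\frac{2j-1}{2(j-1)}\rho_{j-1}(\lambda)=\frac{\lambda^{2j-1}}{2(j-1)(1-\lambda^2)^{j-1}},
\]
you only say you ``expect'' it reduces to a termwise Pochhammer identity. This is the entire content of the induction and is not quite as mechanical as suggested: with $x=\lambda^2/(1-\lambda^2)$ one has $\lambda x^j=\frac{1}{\lambda}\cdot\frac{x^{j+1}}{1+x}$, so the ``boundary'' terms $\frac{\lambda^{2j+1}}{(1-\lambda^2)^j}$ spread over several powers of $x$ after clearing $(1+x)$, and the coefficient comparison mixes the two finite sums nontrivially. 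Your alternative via the binomial expansion of $\bigl(t^2/(1-t^2)\bigr)^j$ is a valid route, but you would still need to carry out the reduction of $\int_0^\lambda(1-t^2)^{-i}\,dt$ for $1\le i\le j$ and reorganise the resulting double sum into the stated Pochhammer form; that bookkeeping is precisely what the paper's proof outsources to the cited lemma.
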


\begin{proof}
The first equality is the direct consequence of Euler's integral representation \cite[15.6.1]{NIST}. To establish the second equality by using integration by parts, we have
\begin{align*}
I & = \int_0^{\lambda} \frac{t^{2 j} dt}{(1 - t^2)^{j}} = \frac{1}{2 j + 1} \int_0^{\lambda} \frac{d(t^{2 j + 1})}{(1 - t^2)^{j}} 
\\  & = \frac{1}{2 j + 1} \left[ \frac{t^{2 j + 1}}{(1 - t^2)^{j}}\bigg|_0^{\lambda} 
- \int_0^{\lambda} t^{2 j + 1} d\left( \frac{1}{(1 - t^2)^j} \right) \right] \\
  & = \frac{1}{2 j + 1} \cdot \frac{\lambda^{2 j + 1}}{(1 - \lambda^2)^j} + \frac{2 j}{2 j + 1} I - \frac{2 j}{2j + 1} \int_0^{\lambda} \frac{t^{2 j} dt}{(1 - t^2)^{j+ 1}},
\end{align*}
where we used 
$$
\frac{t^{2j+2}}{(1-t^2)^{j+1}}=\frac{t^{2j}}{(1-t^2)^{j+1}}-\frac{t^{2j}}{(1-t^2)^{j}}
$$
in the last equality. Thus, we get
\[
I = \frac{\lambda^{2j+1}}{(1-\lambda^2)^j}-2j\int_0^{\lambda}\frac{t^{2j}dt}{(1-t^2)^{j+ 1}}. 
\]
Substituting the closed formula from \cite[Lemma 1]{Dima2007} for the integral on the right-hand side of the above identity, we arrive at~\eqref{EQ:first}.
\end{proof}

For conciseness of the subsequent formulas it is convenient to introduce the parameter 
\begin{equation}\label{eq:beta-defined}
\beta=\beta(\lambda,k)=\frac{1-\lambda^2}{1-k^2}.
\end{equation}

\begin{theorem} \label{THM:first}
Suppose 
\begin{equation} \label{EQ:cond1}
\beta>\lambda^2~\Leftrightarrow~1-\lambda^2k^2<2(1-\lambda^2). 
\end{equation}
For each integer $N\ge1$, we have the following decomposition
\begin{align}
E(\lambda, k) & = \lambda \sum_{j=0}^{N}(-1)^j\frac{(-1/2)_j}{j!} \left[\frac{\lambda^2}{\beta}\right]^j 
+  \ln\left(\frac{1 - \lambda}{1 + \lambda}\right) \sum_{j = 1}^N \frac{(-1/2)_j (1/2)_j}{j! (j - 1)!} (1 - k^2)^j  \nonumber \\
& \quad + \frac{1}{\lambda} \sum_{j = 1}^N\left[\frac{\lambda^2}{\beta}\right]^{j}  \sum_{n = 0}^{j -1} (-1)^{j + n - 1} \frac{(-1/2)_j (1/2 - j)_n}{j! (1 - j)_n}  \left( \frac{1 - \lambda^2}{\lambda^2}\right)^{n} 
+ R_{1, N}(\lambda, k). \label{EQ:expansion1}
\end{align}
The remainder $R_{1, N}(\lambda, k)$ satisfies the inequality 
\begin{equation} \label{EQ:bound1}
|R_{1, N}(\lambda, k) | \leq \frac{\lambda (1 - \lambda^2) (2 N - 1)!!}{N 2^{N+ 2} (N+ 1)!}  \left[\frac{\lambda^2}{\beta}\right]^{N + 1}.
\end{equation}
\end{theorem}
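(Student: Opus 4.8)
The plan is to reduce $E(\lambda,k)$ to a single integral of a square root, expand by the binomial series with an exact Taylor remainder, and integrate term by term using Lemma~\ref{LEM:first}. Starting from \eqref{eq:E-defined} and using the algebraic identity $1-k^2t^2=(1-t^2)\bigl(1+\tfrac{(1-k^2)t^2}{1-t^2}\bigr)$, I would write
$$
E(\lambda,k)=\int_0^\lambda \sqrt{1+\frac{(1-k^2)t^2}{1-t^2}}\,\dd t .
$$
Setting $x(t)=\frac{(1-k^2)t^2}{1-t^2}\ge 0$, I would insert the finite binomial expansion $\sqrt{1+x}=\sum_{j=0}^N\frac{(-1)^j(-1/2)_j}{j!}x^j+R_N(x)$ and integrate. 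The $j=0$ term contributes $\lambda$, while for $j\ge1$ the factor $x(t)^j=(1-k^2)^j\,t^{2j}(1-t^2)^{-j}$ turns $\int_0^\lambda x(t)^j\,\dd t$ into exactly the integral evaluated in Lemma~\ref{LEM:first}.

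Next I would carry out the bookkeeping that sorts the resulting terms into the three sums of \eqref{EQ:expansion1}. The rational pieces $\frac{(-1)^j(-1/2)_j}{j!}(1-k^2)^j\frac{\lambda^{2j+1}}{(1-\lambda^2)^j}$, together with the $j=0$ term, collapse (using $\frac{1-k^2}{1-\lambda^2}=1/\beta$ and $\lambda^{2j+1}=\lambda\lambda^{2j}$) into the first sum $\lambda\sum_{j}(-1)^j\frac{(-1/2)_j}{j!}[\lambda^2/\beta]^j$; the logarithmic pieces, in which the two factors $(-1)^j$ cancel, assemble into the coefficient $\frac{(-1/2)_j(1/2)_j}{j!(j-1)!}(1-k^2)^j$ multiplying $\ln\frac{1-\lambda}{1+\lambda}$; and the finite inner sums, after writing $(\lambda^2/(1-\lambda^2))^{j-n}=[\lambda^2/\beta]^j((1-\lambda^2)/\lambda^2)^n(1-k^2)^{-j}$, reproduce the third sum with the sign $(-1)^{j+n-1}$. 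This step is pure rearrangement of Pochhammer symbols and signs; it is routine but is the place where an index or sign slip is most likely, so I would check the three coefficient families against \eqref{EQ:first} termwise.

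For the remainder I would use $R_{1,N}(\lambda,k)=\int_0^\lambda R_N(x(t))\,\dd t$, where $R_N$ is the Taylor remainder of $(1+x)^{1/2}$. The coefficients $\binom{1/2}{j}$ alternate in sign for $j\ge1$, and since hypothesis \eqref{EQ:cond1} gives $x(t)\le x(\lambda)=\lambda^2/\beta<1$ on $[0,\lambda]$, the series is alternating with strictly decreasing terms; Leibniz' estimate (equivalently, the integral form of the remainder with $(1+u)^{-N-1/2}\le1$) yields $|R_N(x)|\le \frac{(2N-1)!!}{2^{N+1}(N+1)!}\,x^{N+1}$ for $x\ge0$. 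Hence
$$
|R_{1,N}(\lambda,k)|\le \frac{(2N-1)!!}{2^{N+1}(N+1)!}\,(1-k^2)^{N+1}\int_0^\lambda \frac{t^{2N+2}}{(1-t^2)^{N+1}}\,\dd t .
$$

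The main obstacle is the final integral estimate, since the crude bound (integrating by parts and using $(1-t^2)^{-N-1}\ge(1-\lambda^2)^{-N-1}$) loses a factor $\lambda^2$ and is too weak to reach \eqref{EQ:bound1}. I would instead prove the sharp inequality
$$
\int_0^\lambda \frac{t^{2N+2}}{(1-t^2)^{N+1}}\,\dd t\le \frac{1}{2N}\,\frac{\lambda^{2N+3}}{(1-\lambda^2)^{N}}
$$
by a monotonicity argument: both sides vanish at $\lambda=0$, and differentiating shows that the derivative of the right-hand side minus that of the left equals $\frac{3}{2N}\frac{\lambda^{2N+2}}{(1-\lambda^2)^{N}}\ge0$ for $N\ge1$, so the right-hand side dominates throughout $[0,1)$. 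Substituting this and simplifying $(1-k^2)^{N+1}\frac{\lambda^{2N+3}}{(1-\lambda^2)^{N}}=\lambda(1-\lambda^2)[\lambda^2/\beta]^{N+1}$ produces exactly the bound \eqref{EQ:bound1}.
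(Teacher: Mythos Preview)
Your proposal is correct and follows essentially the same path as the paper's proof: rewrite the integrand as $\sqrt{1+x(t)}$ with $x(t)=(1-k^2)t^2/(1-t^2)$, expand by the binomial series, integrate term by term via Lemma~\ref{LEM:first}, and bound the remainder by the first omitted term followed by the same derivative-comparison estimate on $\int_0^\lambda t^{2N+2}(1-t^2)^{-N-1}\,dt$. The only cosmetic difference is that you bound the pointwise Taylor remainder (via the integral form, which works for all $x\ge0$) and then integrate, whereas the paper integrates first and applies the alternating-series test to the resulting tail of integrals; both routes produce the identical intermediate bound, and your computation of $f_2'-f_1'=\frac{3}{2N}\lambda^{2N+2}(1-\lambda^2)^{-N}$ is exactly equivalent to the paper's observation that $f_1'/f_2'=\frac{2(b-1)}{1+2b-3\lambda^2}<1$ with $b=N+1$.
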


\begin{remark} \label{REM:first}
It is apparent from the error bound~\eqref{EQ:bound1} that the expansion~\eqref{EQ:expansion1} is convergent for any fixed $\lambda$ and $k$ satisfying~\eqref{EQ:cond1} and asymptotic when $[(1 - k) \lambda]/(1 - \lambda) \rightarrow 0$ . 
\end{remark}

\begin{remark} \label{REM:complex1}
Expansion~\eqref{EQ:expansion1} also converges for complex $\lambda$ and $k$ 
satisfying $|\lambda^2/\beta| < 1$.
\end{remark}

\begin{proof}
Set $k'^{2} = 1 - k^2$. Expanding $\left[ 1 + (k'^{2} t^2)/(1 - t^2)\right]^{1/2}$ into the binomial series and integrating term-wise, we have
\begin{align*}
E(\lambda, k) & = \int_{0}^\lambda\sqrt{\frac{1-k^2t^2}{1-t^2}}dt=\int_{0}^\lambda dt \left( 1 + \frac{k'^{2} t^2}{1 - t^2}\right)^{1/2} \\
& = \int_0^\lambda dt \left( \sum_{j = 0}^\infty (-1)^j \frac{(-1/2)_j}{j!} \frac{k'^{2 j} t^{2 j}}{(1 - t^2)^j} \right) \\
& = \sum_{j = 0}^N (-1)^j \frac{(-1/2)_j}{j!} k'^{2 j} \int_0^{\lambda} \frac{t^{2 j} dt}{(1 - t^2)^{j}} 
 + \sum_{j =N+1}^\infty (-1)^j \frac{(-1/2)_j}{j!} k'^{2 j} \int_0^{\lambda} \frac{t^{2 j} dt}{(1 - t^2)^{j}}.
\end{align*}
Writing the integral in the first sum as~\eqref{EQ:first}, we get~\eqref{EQ:expansion1} with the remainder given by
\begin{multline*}
R_{1, N}(\lambda,k) = \sum_{j=N+1}^\infty (-1)^j\frac{(-1/2)_j}{j!} k'^{2 j} \int_0^{\lambda} \frac{t^{2 j} dt}{(1 - t^2)^{j}}
\\
=(-1)^{N+1}\sum_{j=0}^\infty (-1)^j\frac{(-1/2)_{N+j+1}}{(N+j+1)!} k'^{2(N+j+1)} \int_0^{\lambda}
\frac{t^{2(N+j+1)} dt}{(1-t^2)^{N+j+1}}=(-1)^{N+1}\underbrace{(-a_0+a_1-a_2+\cdots)}_{=S},
\end{multline*}
where $a_j>0$ for all $j$, so that it is clearly an alternating series. The following argument shows that each term is
smaller in absolute value than the previous one:
\begin{multline*}
\frac{(2 j - 1)!!}{2^{j + 1} (j + 1)!} (1 - k^2)^{j + 1} \int_0^{\lambda} \frac{t^{2 j + 2} dt}{(1 - t^2)^{j+1}} = 
\frac{2 j - 1}{2 ( j + 1)} \frac{(2 j - 3)!!}{2^j j!} (1 - k^2)^j \int_0^{\lambda} \frac{t^{2 j}}{(1 - t^2)^{j}} \frac{(1 - k^2) t^2}{1- t^2}dt \\ \leq \frac{(2 j - 3)!!}{2^j j!} (1 - k^2)^j \int_0^{\lambda} \frac{t^{2 j}}{(1 - t^2)^{j}} \frac{(1 - k^2) \lambda^2 }{1- \lambda^2}dt \leq \frac{(2 j - 3)!!}{2^j j!} (1 - k^2)^j \int_0^{\lambda} \frac{t^{2 j}}{(1 - t^2)^{j}} dt.
\end{multline*}
The last inequality follows from the condition~\eqref{EQ:cond1}. 
Thus, by the alternating series test, we see that the absolute value of the remainder $|R_{1,N}(\lambda,k)|$ 
is bounded by
\[
a_0=\frac{(2 N - 1)!!}{2^{N+1} (N + 1)!} (1 - k^2)^{N+1} \int_{0}^\lambda \frac{t^{2 N + 2} dt}{(1 - t^2)^{N+1}}.
\]
Indeed, $|R_{1,N}(\lambda,k)|=|S|=-S$ and  $-a_0\le S \le 0$.
Next, we prove the following asymptotically exact (as $\lambda \rightarrow 1$) inequality
\begin{equation} \label{EQ:inequal1}
f_1(\lambda) := \int_0^\lambda \frac{t^{2 b} dt}{(1 - t^2)^b} \leq f_2(\lambda) := \frac{\lambda^{2 b + 1}}{2 (b-1)(1 - \lambda^2)^{b - 1}},
\end{equation}
which is valid for each $\lambda \in (0, 1)$ and $b > 1$. Indeed, $f_1(0) = f_2(0) = 0$ and 
\[
\frac{f_1^{'}(\lambda)}{f_2^{'}(\lambda)} = \frac{2 (b - 1)}{1 + 2 b - 3 \lambda^2} < 1, \quad \lambda \in (0, 1).
\]
It remains to note that inequality~\eqref{EQ:inequal1} immediately implies~\eqref{EQ:bound1}.
\end{proof}

\begin{remark}\label{REM:sec}
In~\cite[page 301, 903.01]{ByrdFriedman1971}, Byrd and Friedman presented the following expansion
\begin{equation} \label{EQ:ByrdFriedman}
E(\phi, k) = \sum_{m = 0}^\infty {1/2 \choose m} k'^{2 m} d_{2 m}(\phi), 
\end{equation}
where $\lambda = \sin(\phi)$, and $d_{2 m}(\phi)$'s are given by a linear recurrence relation and initial values. Since
\[
d_{2 m}(\phi) = \int_0^{\sin(\phi)} \frac{t^{2 m} dt}{(1 - t^2)^{m}},
\]
we see that~\eqref{EQ:expansion1} is an equivalent form of~\eqref{EQ:ByrdFriedman}.
\end{remark}

To derive the second expansion, we will need the following lemma from \cite[Lemma~2]{Dima2007}. The symbol ${}_2F_1$ represents the Gauss hypergeometric function and $P_n$ is Legendre's polynomial \cite[section~14.7(i)]{NIST}.

\begin{lemma} \label{LEM:sec}
\begin{enumerate}
\item [(i)] The function $F_n(x) := {_2}F_1(-n, 1/2; 1; x)$ is expressed in terms of Legendre's  polynomials as:
\begin{equation} \label{EQ:Lengendre}
F_n(x) = (1-x)^{n/2} P_n\left( \frac{2 - x}{2 \sqrt{1 - x}}\right).
\end{equation}
\item [(ii)] For each $n \geq 0$, the function $F_n(x)$ is decreasing on  $[0, 1]$, so that
\[
F_n(1) = \frac{(1/2)_n}{n!} \leq F_n(x) \leq F_n(0) = 1.
\] 
\item [(iii)]  For $x \in [1, 2]$,  the function $F_n(x)$ is monotone decreasing when $n$ is an odd and satisfies the following bounds
\[
F_n(2) = 0 \leq F_n(x) \leq F_n(1) = \frac{(1/2)_n}{n!}  \leq 1.
\]
If $n$ is an even, then the function $F_n(x)$ has a single minimum at $x_{\min} \in (1, 2)$, and satisfies  the following bounds
\[
0 < F_n(x) \leq F_n(2) = \frac{n!}{2^n (n/2)!^2} \leq 1.
\]
\item [(iv)] For $x > 2$, the function $F_n(x)$ has the sign $(-1)^n$ and increases (decreases) for even (odd) $n$, so that
\begin{equation}\label{EQ:inequal2F1}
|F_n(x)| \leq (x - 1)^n.
\end{equation}
\end{enumerate}
\end{lemma}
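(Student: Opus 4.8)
The plan is to treat part (i) separately from (ii)--(iv): part (i) is a polynomial identity best proved by comparing generating functions, whereas (ii)--(iv) all flow from a single integral representation together with Pfaff's transformation. For (i), note first that both sides are genuine polynomials in $x$ of degree $n$ (on the right the factor $(1-x)^{n/2}$ combines with the fixed parity of $P_n$ to cancel the square roots), so it suffices to show the generating functions $\sum_{n\ge0}F_n(x)t^n$ and $\sum_{n\ge0}(1-x)^{n/2}P_n\!\big(\tfrac{2-x}{2\sqrt{1-x}}\big)t^n$ agree. Using $(-n)_m=(-1)^m n!/(n-m)!$ and $\sum_{n\ge m}\binom{n}{m}t^n=t^m/(1-t)^{m+1}$, the first sum collapses to $(1-t)^{-1/2}(1-(1-x)t)^{-1/2}$. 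For the second I would substitute $s=t\sqrt{1-x}$ into the classical generating function $\sum_n P_n(w)s^n=(1-2ws+s^2)^{-1/2}$ with $w=\tfrac{2-x}{2\sqrt{1-x}}$, observe $2w\sqrt{1-x}=2-x$, and factor $1-(2-x)t+(1-x)t^2=(1-t)(1-(1-x)t)$ to reach the same closed form. Matching coefficients of $t^n$ yields (i).

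For (ii)--(iv) the workhorse is Euler's integral for ${}_2F_1$, which after the substitution $t=\sin^2\theta$ gives
\[
F_n(x)=\frac{2}{\pi}\int_0^{\pi/2}(1-x\sin^2\theta)^n\,d\theta,\qquad F_n'(x)=-\frac{2n}{\pi}\int_0^{\pi/2}\sin^2\theta\,(1-x\sin^2\theta)^{n-1}\,d\theta.
\]
On $[0,1]$ the integrand $(1-x\sin^2\theta)^n$ is nonnegative and pointwise decreasing in $x$, which gives the monotonicity in (ii); the endpoint values follow from Gauss's evaluation at $x=1$ (equivalently the Wallis integral). For (iii) with $n$ odd, $n-1$ is even, so $F_n'\le0$ and $F_n$ decreases from $F_n(1)=(1/2)_n/n!$ to $F_n(2)=\tfrac1\pi\int_0^\pi\cos^n\phi\,d\phi=0$ (put $\phi=2\theta$). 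For $n$ even, differentiating once more shows $F_n''\ge0$ (now $(1-x\sin^2\theta)^{n-2}\ge0$), so $F_n$ is strictly convex and $F_n'$ has at most one zero; the identity $F_n'(x)=\tfrac nx\big(F_n(x)-F_{n-1}(x)\big)$, obtained from the decomposition $\sin^2\theta=\tfrac1x-\tfrac1x(1-x\sin^2\theta)$ inside the integral, gives $F_n'(1)<0$ and $F_n'(2)=\tfrac n2F_n(2)>0$, pinning down a unique interior minimum. Positivity of $F_n$ on $[1,2]$ is immediate from the nonnegative even-power integrand, and the maximum sits at $x=2$ after the routine comparison $(1/2)_n/n!=F_n(1)\le F_n(2)=n!/(2^n(n/2)!^2)$.

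Finally, for (iv) I would push everything past $x=2$ with Pfaff's transformation $F_n(x)=(-1)^n(x-1)^nF_n\!\big(\tfrac{x}{x-1}\big)$, which is self-reproducing here because $1-\tfrac12=\tfrac12$, and note $\tfrac{x}{x-1}\in(1,2)$ for $x>2$. Combined with the positivity of $F_n$ on $(1,2)$ from (iii), this delivers at once the sign $(-1)^n$ and, since $F_n\le1$ there, the bound $|F_n(x)|\le(x-1)^n$ (which also drops straight out of the integral representation via $|1-x\sin^2\theta|\le x-1$). With the sign established, the monotonicity is automatic: from $F_n'(x)=\tfrac nx(F_n(x)-F_{n-1}(x))$ and $\operatorname{sign}F_j=(-1)^j$ one computes $(-1)^nF_n'(x)=\tfrac nx\big(|F_n(x)|+|F_{n-1}(x)|\big)>0$, i.e.\ $F_n$ increases for even $n$ and decreases for odd $n$.

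The main obstacle is the even-$n$ analysis: securing a \emph{single} minimum on $[1,2]$ and, more importantly, fixing the sign on $(2,\infty)$. The convexity argument resolves the former, while the latter is exactly where Pfaff's transformation is indispensable, reducing the sign-indefinite integral back to the already-understood interval $(1,2)$; once the integral representation and this reduction are in place, all remaining estimates are routine.
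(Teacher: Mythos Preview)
Your proof is correct. The paper itself does not prove this lemma at all; it simply quotes it verbatim from \cite[Lemma~2]{Dima2007}, so there is no ``paper's own proof'' to compare against beyond that citation. Your argument is self-contained and clean: the generating-function computation for (i) (reducing both sides to $(1-t)^{-1/2}(1-(1-x)t)^{-1/2}$) is the standard route, and the integral representation $F_n(x)=\tfrac{2}{\pi}\int_0^{\pi/2}(1-x\sin^2\theta)^n\,d\theta$ together with the contiguous relation $F_n'(x)=\tfrac{n}{x}(F_n-F_{n-1})$ and Pfaff's self-transformation $F_n(x)=(1-x)^nF_n(x/(x-1))$ handles (ii)--(iv) efficiently. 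One small remark: the endpoint comparison $F_n(1)\le F_n(2)$ for even $n$, which you label ``routine'', drops out immediately from the integral form as $\tfrac{2}{\pi}\int_0^{\pi/2}\cos^{2n}\theta\,d\theta\le\tfrac{2}{\pi}\int_0^{\pi/2}\cos^{n}\theta\,d\theta$, so you may wish to say that explicitly rather than leave it to the reader.
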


We are now ready to present our second auxiliary expansion. 

\begin{theorem} \label{THM:sec}
Suppose
\begin{equation} \label{EQ:cond2}
\beta{k^2}<1, 
\end{equation}
where $\beta$ is defined in \eqref{eq:beta-defined}. For each integer $N\ge1$, we have the following decomposition\emph{:}
\begin{multline}\label{EQ:expansion2}
E(\lambda, k) = E(k) 
-\sqrt{(1 - \lambda^2) (1 - k^2)}  \cdot
\sum_{m = 0}^{N - 1} \left(\frac{1}{2m+1}+ \frac{\beta{k^2}}{2m+3}\right)(1-\lambda^2)^m  \\
\cdot {_2}F_1(-m, 1/2; 1; (1 - k^2)^{-1}) + R_{2,N}(\lambda,k)
\\
= E(k)-(1-\lambda^2)\cdot
\sum_{m = 0}^{N-1}(-1)^m\left(\frac{k}{2m+1}+ \frac{\beta{k^3}}{2m+3}\right)\left[\frac{k^2(1-\lambda^2)}{1-k^2}\right]^{m-1/2}  \\
\cdot {_2}F_1(-m, 1/2; 1; 1/k^2) + R_{2,N}(\lambda,k)
,
\end{multline}
where $E(k)=E(1, k)$ is the complete EI of the second kind. The bound for the remainder is given by
\begin{equation} \label{EQ:bound2}
|R_{2, N}(\lambda, k) | \leq \frac{(N+1)\left(\beta{k^2}\right)^N\sqrt{(1 - \lambda^2)(1 - k^2)}}{(N+1/2)(N+3/2)(1-\beta{k^2})}
\end{equation}
for $1/2 \leq k^2 < 1$, and 
\begin{equation} \label{EQ:bound3}
|R_{2, N}(\lambda, k) | \leq \frac{(N + 1)(1-\lambda^2)^N}{\lambda^{2}(N + 1/2)(N + 3/2)}  \sqrt{(1 - \lambda^2) (1 - k^2)}
\end{equation}
for $0 < k^2 \leq 1/2$.
\end{theorem}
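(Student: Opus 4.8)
The plan is to identify the complementary integral $E(k)-E(\lambda,k)=\int_\lambda^1\sqrt{(1-k^2t^2)/(1-t^2)}\,dt$ with a power series in $1-\lambda^2$ whose coefficients are the functions $F_m$ of Lemma~\ref{LEM:sec}, and then to read off $R_{2,N}$ as the tail of that series. First I would apply the substitution $s=\sqrt{1-t^2}$, which maps $t\in[\lambda,1]$ onto $s\in[0,\sqrt{1-\lambda^2}]$. Using $\sqrt{1-k^2t^2}=\sqrt{(1-k^2)+k^2(1-t^2)}$ and $dt/\sqrt{1-t^2}=-\,ds/\sqrt{1-s^2}$, the integral becomes
$$E(k)-E(\lambda,k)=\int_0^{\sqrt{1-\lambda^2}}\frac{\sqrt{(1-k^2)+k^2 s^2}}{\sqrt{1-s^2}}\,ds.$$

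The key algebraic step is to produce a generating function for $F_m(x)={}_2F_1(-m,1/2;1;x)$. Starting from the representation $F_m(x)=(1-x)^{m/2}P_m((2-x)/(2\sqrt{1-x}))$ of Lemma~\ref{LEM:sec}(i) and the classical $\sum_n P_n(y)H^n=(1-2yH+H^2)^{-1/2}$, I set $H=h\sqrt{1-x}$; since $2y\sqrt{1-x}=2-x$ and $1-(2-x)h+(1-x)h^2=(1-h)(1-(1-x)h)$, this yields
$$\sum_{m\ge0}F_m(x)\,h^m=\frac{1}{\sqrt{(1-h)(1-(1-x)h)}}.$$
Specializing to $x=(1-k^2)^{-1}$ and $h=s^2$ gives $[(1-s^2)(1+k^2 s^2/(1-k^2))]^{-1/2}=\sum_m F_m((1-k^2)^{-1})s^{2m}$, valid on all of $[0,\sqrt{1-\lambda^2}]$ precisely because condition~\eqref{EQ:cond2}, $\beta k^2<1$, keeps $s^2$ inside the disk of convergence. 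Writing the integrand as $\sqrt{1-k^2}\,(1+k^2 s^2/(1-k^2))$ times this series, integrating term by term (justified by uniform convergence on the compact interval), and using $\int_0^{\sqrt{1-\lambda^2}}s^{2m}\,ds=(1-\lambda^2)^{m+1/2}/(2m+1)$ and $\int_0^{\sqrt{1-\lambda^2}}s^{2m+2}\,ds=(1-\lambda^2)^{m+3/2}/(2m+3)$, I obtain the first form of~\eqref{EQ:expansion2} as an exact infinite series, with $R_{2,N}$ equal to the sum from $m=N$ to $\infty$. The second form then follows from the reflection $P_m(-z)=(-1)^mP_m(z)$, which translates into the transformation $F_m((1-k^2)^{-1})=(-1)^m\,k^{2m}(1-k^2)^{-m}F_m(k^{-2})$ relating the two hypergeometric arguments.

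For the error bounds I would estimate the tail termwise. In both regimes I use the elementary inequality $\frac{1}{2m+1}+\frac{\beta k^2}{2m+3}\le\frac{1}{2m+1}+\frac{1}{2m+3}\le\frac{4(N+1)}{(2N+1)(2N+3)}=\frac{N+1}{(N+1/2)(N+3/2)}$ for $m\ge N$, since both summands decrease in $m$. When $1/2\le k^2<1$ the argument $x=(1-k^2)^{-1}\ge2$, so Lemma~\ref{LEM:sec}(iv) gives $|F_m(x)|\le(x-1)^m=(k^2/(1-k^2))^m$; combining with $(1-\lambda^2)^m(k^2/(1-k^2))^m=(\beta k^2)^m$ and summing $\sum_{m\ge N}(\beta k^2)^m=(\beta k^2)^N/(1-\beta k^2)$ yields~\eqref{EQ:bound2}. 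When $0<k^2\le1/2$ the argument lies in $(1,2]$, so Lemma~\ref{LEM:sec}(iii) gives $|F_m(x)|\le1$; summing $\sum_{m\ge N}(1-\lambda^2)^m=(1-\lambda^2)^N/\lambda^2$ yields~\eqref{EQ:bound3}.

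The main obstacle I anticipate lies in the generating-function rearrangement: one must confirm that the two square-root factors recombine into exactly the Legendre generating function (with the reciprocal landing in the correct slot), that its convergence region coincides on the nose with~\eqref{EQ:cond2}, and that interchanging the series with the integral is legitimate. Once this is in place the bounding step is essentially routine, the only delicate bookkeeping being the selection of the correct part of Lemma~\ref{LEM:sec} according to whether $x=(1-k^2)^{-1}$ is at least $2$ or lies in $(1,2]$, and the simplification that turns $\frac{1}{2m+1}+\frac{1}{2m+3}$ into the stated constant $\frac{N+1}{(N+1/2)(N+3/2)}$.
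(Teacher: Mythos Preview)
Your proof is correct and follows essentially the same route as the paper: the substitution $s=\sqrt{1-t^2}$ is precisely the change of variables underlying the paper's reflection identity~\eqref{EQ:reflection}, your termwise integration reproduces what the paper obtains by substituting Carlson's expansion~\eqref{EQ:Carlsonformula} into that identity, and the remainder bounds via Lemma~\ref{LEM:sec}(iii),(iv) together with the monotonicity of $4(m+1)/[(2m+1)(2m+3)]$ are handled identically. The one difference is that you derive the generating function $\sum_{m\ge0} F_m(x)h^m=[(1-h)(1-(1-x)h)]^{-1/2}$ directly from Lemma~\ref{LEM:sec}(i) and the Legendre generating function, which makes the argument self-contained rather than relying on the cited Carlson series.
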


\begin{remark} \label{REM:third}
It is clear from the error bounds~\eqref{EQ:bound2} and~\eqref{EQ:bound3} that the expansion~\eqref{EQ:expansion2} is convergent for any fixed $\lambda$ and $k$ satisfying~\eqref{EQ:cond2} and is asymptotic as $(1 - \lambda)/(1 - k) \to 0$. 
\end{remark}

\begin{remark}
The set of points satisfying either condition~\eqref{EQ:cond1} or  condition~\eqref{EQ:cond2}  covers the entire unit $(k,\lambda)$ square \emph{(}see Figure~\ref{fig:r3}\emph{)}. 
\end{remark}

\begin{remark} \label{REM:complex2}
Expansion~\eqref{EQ:expansion2} also holds for complex $\lambda$ and $k$ 
satisfying $|((1 - \lambda^2) k^2)/(1 - k^2)| < 1$.
\end{remark}

\begin{figure}[htb]
\begin{center}
\includegraphics[width=10cm]{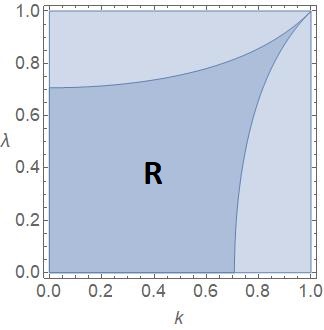}
\end{center}
\caption{The set of points in the unit square satisfying~\eqref{EQ:cond1} or~\eqref{EQ:cond2} covers the unit square. The deep blue region $R$ comprises the points satisfying both~\eqref{EQ:cond1} and~\eqref{EQ:cond2}.}
\label{fig:r3}
\end{figure}

\begin{proof}
By~\cite[(3.1), (3.2)]{Carlson1961}, we have the expansion around $\lambda = k = 0$:
\[
E(\lambda, k) = \sum_{m = 0}^\infty k^m P_n\left(\frac{k + k^{-1}}{2} \right)\lambda^{2 m + 1}  \left[\frac{1}{2 m + 1} - \frac{k^2 \lambda^2}{2 m + 3} \right].
\]
Applying item~(i) of Lemma~\ref{LEM:sec} to the above identity, we get
\begin{equation} \label{EQ:Carlsonformula}
E(\lambda, k) = \sum_{m = 0}^\infty \lambda^{2 m + 1} {_2}F_1(-m, 1/2; 1; 1 - k^2) \left[\frac{1}{2 m + 1} - \frac{k^2 \lambda^2}{2 m + 3} \right],
\end{equation}
which is valid for $\lambda \in (0, 1)$ and $|k| < 1/\lambda$. Next, we will employ the reflection-type relation
\begin{equation}\label{EQ:reflection}
E(\lambda, k) = E(k) - \sqrt{1 - k^2} \cdot E\left(\sqrt{1 - \lambda^2}, \sqrt{-k^2/(1 - k^2)} \right), 
\end{equation}
which can be verified by representing the second incomplete EI as the difference of the second complete EI $E(k)$ and 
the integral over the interval $(\lambda, 1)$ and then introducing the integration variable $v^2 = 1 - t^2$.  
Substituting~\eqref{EQ:Carlsonformula} into the second term on the right side of~\eqref{EQ:reflection} and splitting the 
corresponding series, we derive~\eqref{EQ:expansion2}, which is valid for $\lambda \in (0, 1)$ and $((1 - \lambda^2) k^2)/(1 - k^2) < 1$, with the remainder given by
\begin{multline*}
R_{2, N}(\lambda, k) = -\sqrt{(1-\lambda^2)(1-k^2)}\sum_{m = N}^{\infty}
\left(\frac{1}{2m+1}+\frac{\beta{k^2}}{2m+3}\right) (1-\lambda^2)^m \\
\cdot {_2}F_1(-m, 1/2; 1; (1 - k^2)^{-1}).
\end{multline*}
Using items~(ii),~(iii),~(iv) of Lemma~\ref{LEM:sec} and condition~\eqref{EQ:cond2}, we obtain
\[
|R_{2,N}(\lambda, k) | \leq \sqrt{(1 - \lambda^2) (1 - k^2)} \cdot 
\sum_{m = N}^{\infty} \frac{4(m + 1)}{(2m+1)(2m+3)} \left(\beta{k^2}\right)^m 
\]
for $1/2 \leq k^2 < 1$, and
\[
|R_{2,N}(\lambda,k)| \leq \sqrt{(1-\lambda^2)(1-k^2)}\sum_{m = N}^{\infty} \frac{4(m+1)}{(2m+1)(2m+3)}(1-\lambda^2)^m 
\]
for $0 < k^2 \leq 1/2$. Applying the following inequality
\begin{multline*}
\sum_{m = N}^\infty \frac{4(m+1)x^m }{(2m+1)(2m+3)} = x^N\sum_{s=0}^\infty\frac{4(N+s+1)x^s}{(2N+2s+1) (2N + 2 s +3)}  \\ \leq \frac{4x^N(N+1)}{(2N+1)(2N+3)}\sum_{s = 0}^{\infty}x^s =\frac{4x^N (N+1)}{(2N+1)(2N+3)(1-x)},
\end{multline*}
which is valid for $x \in (0, 1)$, we get~\eqref{EQ:bound2} and~\eqref{EQ:bound3}. Finally, the second equality in \eqref{EQ:expansion2} follows on application of Pfaff's transformation \cite[15.8.1]{NIST}. 
\end{proof}

\section{The first asymptotic expansion} \label{SEC:first}

For each $n \geq 0$, set 
\begin{equation} \label{EQ:hypergeo}
s_n(x) = \sum_{j = n + 1}^\infty \frac{(-1/2)_j (1/2 - j)_n}{j! (1 - j)_n} (-x)^j.
\end{equation}
Making a change of the summation variable $m = j - n - 1$, we get
\begin{equation}\label{eq:sn4F3}
s_n(x)=\frac{(-1/2)_{n+1}(3/2)_n}{(n+1)!n!}(-x)^{n+1} {}_4F_3\left(\begin{matrix}1,&1,&1/2 + n&3/2 + n \\&3/2,&1+ n,&2 + n \end{matrix} \Bigg| -x \right),
\end{equation}
where ${}_4F_3$ represents the generalized hypergeometric function. Note that this formula implies that $s_n(x)$ is holomorphic in the cut $x$-plane $\mathbb{C}\setminus(-\infty,-1]$. In particular, it is holomorphic in the unit disk $|x|<1$ with a branch point at $x=-1$.

Next, we present a linear recurrence relation for $s_n(x)$ obtained by the method of creative telescoping~\cite{Zeilberger1991} and the initial values in terms of elementary functions.

\begin{lemma} \label{LEM:ct}
The function $s_n(x)$ satisfies the following third-order linear inhomogenous recurrence relation
\begin{equation} \label{EQ:rec}
4 (n + 2) (n + 3) s_{n + 3}(x) = a_n(x) s_{n + 2}(x) + b_n(x) s_{n + 1}(x) + c_n(x) s_n(x) + d_n(x),
\end{equation}
where
\begin{align*}
a_n(x) & = - (2n + 3) (2nx + 5x - 4n -8), \\
b_n(x) & = (2n + 3) (4nx + 4x - 2n - 1), \\
c_n(x) & = -4n (1 + n) x, \\
d_n(x) & = - a_n(x) g(n+2, n+3) - b_n(x) [g(n+1, n+2) + g(n + 1, n+3)] \\
           & \quad - c_n(x) [g(n, n+1) + g(n,n +2) + g(n, n+3)] - h(n, n +4)
\end{align*}
with
\begin{align*}
g(n, j) & = \frac{(-1/2)_j (1/2 - j)_n}{j! (1 - j)_n} (-x)^j, \\ 
h(n, n+ 4) & = -\frac{7}{4} \frac{(-1/2)_{n+4} (-7/2 -n)_n}{(n + 2)! (-3 -n)_n} (-x)^{n + 4} 
\end{align*}
for each $j, n \geq 0$. The initial values are given by
\begin{align}
s_0(x) & = \sqrt{1 + x} - 1,  \label{EQ:s0} \\ 
s_1(x) & = \frac{1}{4}\left[-2 + 2\sqrt{1 + x} + x \left(2\ln2-1-2\ln(1 + \sqrt{1 + x})\right)\right], \label{EQ:s1} \\ 
s_2(x) & = -\frac{3 x^2}{16 (1 + \sqrt{1 + x})^2 (-1 + \sqrt{1 + x})} \left[-\frac{1}{2} (x - \frac{8}{3}) (1 + \sqrt{1 + x})  \ln (1 + \sqrt{1 + x}) \right. \nonumber\\
           &  \quad  + \frac{1}{2} x (1 + \sqrt{1 + x})  \ln (-1 + \sqrt{1 + x}) \nonumber \\
           & \quad + \left((x - \frac{4}{3}) \ln 2 - \frac{1}{2} x \ln x - \frac{13}{12} x + 1 \right) \sqrt{1 + x} + (x - \frac{4}{3}) \ln 2 - \frac{1}{2} x \ln x 
      \left. - \frac{x}{12} - 1\right].\label{EQ:s2} 
\end{align}
\end{lemma}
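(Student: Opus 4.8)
The plan is to handle the recurrence and the initial values by different means. For the recurrence \eqref{EQ:rec} I would set up creative telescoping on the summand of \eqref{EQ:hypergeo}, writing
$$g(n,j)=\frac{(-1/2)_j(1/2-j)_n}{j!\,(1-j)_n}(-x)^j,\qquad s_n(x)=\sum_{j\ge n+1}g(n,j),$$
and view $g$ as a hypergeometric term in the two integer variables $(n,j)$. Running Zeilberger's algorithm should produce the polynomials $a_n(x),b_n(x),c_n(x)$ together with the normalizing factor $4(n+2)(n+3)$ and a rational certificate $R(n,j)$ for which the pointwise telescoping identity
$$4(n+2)(n+3)\,g(n+3,j)-a_n(x)\,g(n+2,j)-b_n(x)\,g(n+1,j)-c_n(x)\,g(n,j)=G(n,j+1)-G(n,j)$$
holds with $G(n,j)=R(n,j)\,g(n,j)$. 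Once the operator and certificate are guessed, this becomes a finite verification: dividing out the common Pochhammer factors reduces it to a rational-function identity in $(n,j,x)$ that can be checked mechanically.

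The next step is to sum this identity over $j$. Because the lower limit in \eqref{EQ:hypergeo} depends on $n$, the four shifted series $s_n,s_{n+1},s_{n+2},s_{n+3}$ start at $j=n+1,n+2,n+3,n+4$, so I would sum the telescoped identity from the common index $j=n+4$ to $\infty$. On the right-hand side the sum telescopes to $-G(n,n+4)$; since the series is holomorphic on $\bC\setminus(-\infty,-1]$ (as noted after \eqref{eq:sn4F3}) and $g(n,j)$ decays geometrically in $j$ for $|x|<1$ while $R(n,j)$ is rational of bounded degree, the boundary term at $+\infty$ vanishes and $-G(n,n+4)$ is precisely the contribution recorded as $-h(n,n+4)$. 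On the left-hand side I would use $\sum_{j\ge n+4}g(n+i,j)=s_{n+i}(x)-\sum_{j=n+i+1}^{n+3}g(n+i,j)$ for $i=0,1,2$, which reinstates exactly the finitely many near-boundary terms $g(n,n+1),g(n,n+2),g(n,n+3)$, $g(n+1,n+2),g(n+1,n+3)$, and $g(n+2,n+3)$. Collecting these with the coefficients $c_n,b_n,a_n$ and adjoining $-h(n,n+4)$ reproduces the inhomogeneity $d_n(x)$ exactly as stated, giving \eqref{EQ:rec}.

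For the initial values I would compute $s_0,s_1,s_2$ directly. Using $\binom{1/2}{j}=(-1)^j(-1/2)_j/j!$, the $n=0$ series is the binomial series, so $s_0(x)=(1+x)^{1/2}-1$ follows at once; equivalently, in \eqref{eq:sn4F3} the ${}_4F_3$ collapses to ${}_2F_1(1,1/2;2;-x)=2(\sqrt{1+x}-1)/x$, giving \eqref{EQ:s0}. For $s_1$ and $s_2$ the ${}_4F_3$ no longer reduces to a single Gauss function and logarithmic terms appear, so I would obtain the closed forms \eqref{EQ:s1}--\eqref{EQ:s2} by symbolic/hypergeometric summation of the defining series (for instance by integrating the generating identity $\sum_{j\ge0}\frac{(-1/2)_j}{j!}(-x)^j=(1+x)^{1/2}$ against appropriate weights) and then verify the resulting elementary expressions by matching Taylor coefficients in $x$, or by checking agreement of value and derivatives at a convenient base point.

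I expect the principal difficulty to be bookkeeping rather than conceptual. The delicate point is the correct treatment of the $n$-dependent summation boundary: one must evaluate each shifted summand $g(n+i,j)$ only at indices $j\ge n+4$, where $(1-j)_n\ne0$ (the terms with $1\le j\le n$ are genuinely singular, which is exactly why \eqref{EQ:hypergeo} begins at $j=n+1$), and account for every omitted near-boundary term with the right sign so that the inhomogeneity matches $d_n(x)$ and the certificate value matches $h(n,n+4)$ precisely. A secondary technical point is justifying the interchange of summation and the vanishing of the telescoping limit as $j\to\infty$, both of which follow from the geometric decay of $g(n,j)$ on the stated domain of holomorphy.
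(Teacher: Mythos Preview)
Your proposal is correct and follows essentially the same route as the paper: creative telescoping on the summand $g(n,j)$ (the paper invokes Chyzak's algorithm via Koutschan's package, which for this proper-hypergeometric term amounts to the Zeilberger-style computation you describe), followed by summing the telescoped identity over $j\ge n+4$ and inserting the near-boundary terms to recover $d_n(x)$ and $h(n,n+4)$. The only minor difference is in the treatment of $s_2$: the paper obtains it by writing $s_2(x)=s_1(x)+\tfrac{3}{16}x^2-\tfrac12 x^2 f(x)$ with $(-x)^3 f'(x)=s_1(x)+\tfrac{3}{16}x^2$ and integrating, rather than by direct symbolic summation, but this is a bookkeeping variation rather than a different idea.
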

\begin{proof}
Using Koutschan's {\tt Mathematica} package {\tt HolonomicFunctions.m}~\cite{Christoph2010} that
implements Chyzak's algorithm~\cite{Chyzak2000}, we derive the following  linear  recurrence relation for 
the generic term $g(n, j)$ in~\eqref{EQ:hypergeo}: 
\begin{equation} \label{EQ:ct}
4 (n + 2) (n + 3) g(n + 3, j) = a_n(x) g(n + 2, j) + b_n(x) g(n + 1, j) + c_n(x) g(n, j) + \Delta_j\left(h(n, j)\right),
\end{equation}
where
\begin{align*}
& h(n, j) = -\frac{3 (j - 1) j (2 j - 2 n - 1)}{2 (j - n -3) (j - n - 2) (j - n - 1)} g(n, j), \\
& \Delta_j\left(h(n, j)\right)  = h(n, j + 1) - h(n, j),
\end{align*}
for $j \geq n + 4$. Taking sum in~\eqref{EQ:hypergeo} with respect to $j$ from $n + 4$ to $\infty$, we get 
\begin{multline}
4 (n + 2) (n + 3) \sum_{j = n + 4}^\infty g(n + 3, j) = a_n(x)  \sum_{j = n + 4}^\infty  g(n + 2, j) + b_n(x) \sum_{j = n + 4}^\infty  g(n + 1, j) \\ 
+ c_n(x) \sum_{j = n + 4}^\infty  g(n, j) + \lim_{j \rightarrow \infty} h(n, j) - h(n, n + 4). \label{EQ:sum}
\end{multline}
Note that 
\begin{align*}
& \sum_{j = n + 4}^\infty g(n + 3, j) = s_{n + 3}(x), \\
& \sum_{j = n + 4}^\infty  g(n + 2, j) = s_{n + 2}(x) - g(n + 2, n + 3), \\
& \sum_{j = n + 4}^\infty  g(n + 1, j) = s_{n + 1}(x) - [g(n+1, n+2) + g(n + 1, n+3)], \\
& \sum_{j = n + 4}^\infty  g(n, j) = s_{n}(x) - [g(n, n+1) + g(n,n +2) + g(n, n+3)], \\
& \lim_{j \rightarrow \infty} h(n, j) = 0. 
\end{align*}
Thus, we see that~\eqref{EQ:sum} leads to~\eqref{EQ:rec}.

By the {\tt Mathematica} command ``{\tt Sum}", it is straightforward to find the closed formulae~\eqref{EQ:s0} and~\eqref{EQ:s1} for $s_0(x)$ and $s_1(x)$, respectively. 

To derive a formula for $s_2(x)$ consider
\begin{align*}
s_2(x) & = \sum_{j = 3}^\infty \frac{(-1/2)_j (1/2 - j) (3/2 - j)}{j! (1 - j) (2 - j)} (-x)^j \\
           & = \sum_{j = 3}^\infty \frac{(-1/2)_j (1/2 - j)}{j! (1 - j)} (-x)^j - \frac{1}{2} \sum_{j = 3}^\infty \frac{(-1/2)_j (1/2 - j)}{j! (1 - j) (2 - j)} (-x)^j \\
           & = s_1(x) + \frac{3}{16} x^2 - \frac{1}{2} \sum_{j = 3}^\infty \frac{(-1/2)_j (1/2 - j)}{j! (1 - j) (2 - j)} (-x)^j \\
           & = s_1(x) + \frac{3}{16} x^2 - \frac{x^2}{2} \sum_{j = 3}^\infty \frac{(-1/2)_j (1/2 - j)}{j! (1 - j) (2 - j)} (-x)^{j - 2}. 
\end{align*}
Set 
$$
f(x) = \sum_{j = 3}^\infty \frac{(-1/2)_j (1/2 - j)}{j! (1 - j) (2 - j)} (-x)^{j - 2}. 
$$
Then $s_2(x) = s_1(x) + \frac{3}{16}x^2 - \frac{1}{2}x^2f(x)$. Thus, in order to derive~\eqref{EQ:s2}, we just need  a closed formula for $f(x)$. 
Note that 
\begin{align*}
(-x)^3 f'(x) & = \sum_{j = 3}^\infty \frac{(-1/2)_j (1/2 - j)}{j! (1 - j)} (-x)^j \\
                   & = s_1(x) + \frac{3}{16} x^2.
\end{align*}
Thus, we have 
\[
f(x) = \int_{0}^x \left(-\frac{1}{t^3} s_1(t) - \frac{3}{16 t}\right) dt.
\]
Using  the {\tt Maple} command ``{\tt int}", we obtain a closed formula for $f(x)$, which leads to~\eqref{EQ:s2}.
\end{proof}

Next, we present the main result of this section.

\begin{theorem} \label{THM:firstexpansion}
For each $(\lambda, k) \in (0, 1) \times (0, 1)$ and an integer $N \geq 1$, the second incomplete EI admits the following representation
\begin{align}
E(\lambda, k) & = \lambda \sqrt{1 + \frac{\lambda^2}{\beta}} + \ln\left(\frac{1 - \lambda}{1 + \lambda}\right) \sum_{j = 1}^N \frac{(-1/2)_j (1/2)_j}{j! (j - 1)!} (1 - k^2)^j \nonumber \\
 & \quad -\frac{1}{\lambda} \sum_{n = 0}^{N - 1} \left(\frac{1 - \lambda^2}{-\lambda^2} \right)^n s_n\left( \frac{\lambda^2}{\beta}\right) + R_N(\lambda, k), \label{EQ:EIfirstexpansion}
\end{align}
where $\beta$ is defined in \eqref{eq:beta-defined} and the function $s_n(x)$ is given in Lemma~\ref{LEM:ct}. Moreover, the remainder $R_N(\lambda, k)$ is negative and satisfies
\begin{equation} \label{EQ:remainderbound1}
\frac{(1/2)_N (1/2)_{N+1} (1 - k^2)^N}{2 N! (N+ 1)!} f_{N+1}(\lambda, k) < - R_{N}(\lambda, k)  < \frac{(1/2)_N (1/2)_{N+1} (1 - k^2)^N}{2 N! (N + 1)!} f_{N}(\lambda, k),
\end{equation}
where the positive function
\begin{equation}\label{EQ:positivefunction}
f_N(\lambda, k) = \frac{1}{1-(1-k^2)/\theta_{N}} 
\Bigg[\frac{\theta_{N}}{\sqrt{\lambda^2 + \beta\theta_{N}}} \ln\frac{\sqrt{\lambda^2+\beta\theta_{N}}+\lambda}{\sqrt{\lambda^2 +\beta\theta_{N}}-\lambda}
+(1-k^2)\ln\frac{1-\lambda}{1+\lambda}\Bigg]
\end{equation} 
with
\begin{equation} \label{EQ:thetaN}
1<\theta_N=\frac{N(N+1)}{(N-1/2)(N+1/2)}\le \frac{8}{3}
\end{equation}
is strictly decreasing in $N$ and bounded on each subset $F$ of the unit $(\lambda,k)$ square such that
\begin{equation}\label{EQ:boundedcond}
\sup_{\lambda, k\in F} \frac{1 - k}{1 - \lambda} < \infty.
\end{equation}
\end{theorem}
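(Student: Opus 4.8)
The plan is to derive the identity \eqref{EQ:EIfirstexpansion} by re-summing the exact binomial expansion underlying Theorem~\ref{THM:first}, and then to control $R_N$ by feeding the hypergeometric description \eqref{eq:sn4F3} of $s_n$ into a geometric majorant. Exactly as in the proof of Theorem~\ref{THM:first}, I would start from the absolutely convergent series $E(\lambda,k)=\sum_{j\ge0}(-1)^j\frac{(-1/2)_j}{j!}(1-k^2)^j\int_0^\lambda t^{2j}(1-t^2)^{-j}\,dt$ and insert \eqref{EQ:first}. The power parts sum to the \emph{full} binomial series $\lambda\sum_{j\ge0}(-1)^j\frac{(-1/2)_j}{j!}(\lambda^2/\beta)^j=\lambda\sqrt{1+\lambda^2/\beta}$, giving the first term of \eqref{EQ:EIfirstexpansion}; the logarithmic parts give the second term; and the finite inner sums, after interchanging the order of summation (legitimate by absolute convergence under \eqref{EQ:cond1}) and using the definition \eqref{EQ:hypergeo}, collapse to $-\frac1\lambda\sum_{n\ge0}(\frac{1-\lambda^2}{-\lambda^2})^n s_n(\lambda^2/\beta)$. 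Truncating the latter two series at $N$ and $N-1$ exhibits $R_N$ as the sum of a logarithmic tail ($j>N$) and an $s_n$-tail ($n\ge N$). Since \eqref{eq:sn4F3} shows every $s_n$ is holomorphic on $\mathbb{C}\setminus(-\infty,-1]$, all terms of \eqref{EQ:EIfirstexpansion} are analytic on $(0,1)^2$; as the identity already holds on the open set \eqref{EQ:cond1}, it propagates to all of $(0,1)^2$ by analytic continuation.

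For signs and magnitudes I would argue as follows. The prefactor of $s_n(x)$ in \eqref{eq:sn4F3} has sign $(-1)^n$ for $x>0$, and the $_4F_3$ factor, call it $\Phi_n(x)$, is positive; hence $(-1)^n s_n(x)>0$, each term of the $s_n$-tail is negative, the logarithmic tail is positive, and the asserted negativity of $R_N$ will follow from the lower bound. For magnitudes I would factor the constant $\frac{(1/2)_N(1/2)_{N+1}(1-k^2)^N}{2N!(N+1)!}$ out of $-R_N$ (using $(-1/2)_{N+1}=-\tfrac12(1/2)_N$). The successive ratio inside the $s_n$-tail computes exactly to $\frac{1-k^2}{\theta_{n+1}}\cdot\frac{\Phi_{n+1}(x)}{\Phi_n(x)}$, and the logarithmic-tail coefficients have ratio $\frac{1-k^2}{\theta_{j}}$; the key inputs are then sharp two-sided bounds on the $_4F_3$ ratio $\Phi_{n+1}/\Phi_n$. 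Summing the resulting geometric majorants produces the factor $1/(1-(1-k^2)/\theta_N)$ (respectively with $\theta_{N+1}$), while evaluating the leading terms in closed form yields the bracket in \eqref{EQ:positivefunction}: summing the power contributions via $\frac1A\ln\frac{A+\lambda}{A-\lambda}=\frac2\lambda\sum_{p\ge0}\frac1{2p+1}(\frac{\lambda^2}{\lambda^2+\beta\theta_N})^{p+1}$ with $A=\sqrt{\lambda^2+\beta\theta_N}$ gives the $\operatorname{artanh}$-type term, and the logarithmic tail gives the $(1-k^2)\ln\frac{1-\lambda}{1+\lambda}$ term. Using $\theta_N$ gives the upper bound $f_N$ and $\theta_{N+1}$ the lower bound $f_{N+1}$.

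For the elementary properties, writing $\theta_N=1+\frac{N+1/4}{N^2-1/4}$ makes \eqref{EQ:thetaN} and strict monotone decrease immediate, and since $1-k^2<1<\theta_N$ we get $1-(1-k^2)/\theta_N>0$, so $f_N>0$ and the prefactor is at most $\theta_N/(\theta_N-1)$. Monotonicity of $f_N$ in $N$ reduces, via monotonicity of $\theta_N$, to checking that the one-parameter map $\theta\mapsto f(\theta)$ is monotone, a direct derivative comparison. For boundedness on $F$ satisfying \eqref{EQ:boundedcond}, set $M=\sup_F(1-k)/(1-\lambda)<\infty$; then $1-k^2\le 2M(1-\lambda)$ forces $\beta=\frac{1-\lambda^2}{1-k^2}\ge\frac{1+\lambda}{2M}\ge\frac1{2M}$, so that, using $\operatorname{artanh}(u)\le u/(1-u^2)$, the $\operatorname{artanh}$-term is at most $\frac{2\lambda}{\beta}\le 4M$, while $(1-k^2)|\ln\frac{1-\lambda}{1+\lambda}|\le 2M(1-\lambda)|\ln\frac{1-\lambda}{1+\lambda}|\to0$; hence $f_N$ is bounded on $F$.

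The crux will be the sharp two-sided estimate \eqref{EQ:remainderbound1}. Because $-R_N$ is a \emph{difference} of the positive logarithmic tail and the larger positive $s_n$-tail, naive termwise bounds destroy the cancellation; one must control the $_4F_3$ ratios $\Phi_{n+1}(x)/\Phi_n(x)$ for \emph{all} $x=\lambda^2/\beta>0$ tightly enough that the two geometric majorants close up into the matching functions $f_N$ and $f_{N+1}$. The requirement that the underlying hypergeometric inequalities hold on the whole ray $x>0$ (not merely on $|x|<1$) is essential, since once \eqref{EQ:cond1} fails the bound cannot be recovered from the identity by analytic continuation.
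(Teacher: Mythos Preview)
Your derivation of the identity \eqref{EQ:EIfirstexpansion} and the analytic continuation to all of $(0,1)^2$ follow the paper's argument closely and are fine.

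The gap is in the remainder estimate. You correctly diagnose the difficulty: $R_N$ is a difference of two tails, and termwise bounds on each separately cannot recover the sharp two-sided inequality \eqref{EQ:remainderbound1}. Your proposed remedy---uniform two-sided control of the ${}_4F_3$ ratios $\Phi_{n+1}(x)/\Phi_n(x)$ for all $x>0$, then summing a geometric majorant---is not carried out, and I do not see how to make it work: no such ratio inequalities are established (or cited), and even granting them it is not clear that the resulting geometric sum collapses to exactly the function $f_N$ in \eqref{EQ:positivefunction}. Your claim that $(-1)^n s_n(x)>0$ for all $x>0$ also needs justification, since for $x>1$ the defining series of $\Phi_n(-x)$ is alternating with increasing terms.

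The paper avoids all of this by \emph{merging} the two tails before estimating. The key device is the identity (from \cite[p.~197]{Dima2007})
\[
\sum_{n=N}^{j-1}\frac{(1/2-j)_n}{(1-j)_n}\Big(\tfrac{1-\lambda^2}{-\lambda^2}\Big)^{\!n}
=\frac{(1/2)_j}{(j-1)!}\Big(\tfrac{1-\lambda^2}{-\lambda^2}\Big)^{\!j}\!\Big[\lambda\ln\tfrac{1-\lambda}{1+\lambda}
+\int_0^{\lambda^2/(1-\lambda^2)}\!\!\frac{(-u)^{j-N}\,du}{(1+u)\sqrt{1-u(1-\lambda^2)/\lambda^2}}\Big],
\]
which, once substituted into the $s_n$-tail (re-indexed back to a $j$-sum), makes the logarithmic tail cancel identically. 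What remains is a single integral whose $j$-sum is a ${}_3F_2(1,N+\tfrac12,N+\tfrac32;N+1,N+2;-(1-k^2)u)$. Negativity of $R_N$ is then immediate from the sign of $(-1/2)_{N+1}$, and the two-sided bound \eqref{EQ:remainderbound1} follows from the known inequality \cite[Theorem~2]{Dima2009}
\[
\frac{1}{1+x/\theta_{N+1}}<{}_3F_2\big(1,N+\tfrac12,N+\tfrac32;N+1,N+2;-x\big)<\frac{1}{1+x/\theta_N},\qquad x>0,
\]
after which the resulting $u$-integral is evaluated in closed form to give \eqref{EQ:positivefunction}. This integral representation is the missing idea in your outline; without it the cancellation between the two tails cannot be made explicit and the precise form of $f_N$ does not emerge.
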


\begin{remark}
The error bound~\eqref{EQ:remainderbound1} implies that  expansion~\eqref{EQ:EIfirstexpansion} is convergent at any point in the open unit square and convergence is uniform on compact subsets. Furthermore, it is asymptotic as $k\to1$ along any curve $\gamma$ lying inside the unit square with~\eqref{EQ:boundedcond} satisfied, including those with the endpoint $(1, 1)$.
\end{remark}

\begin{remark}
Condition~\eqref{EQ:boundedcond} is true for any trapezoid with vertices $(0,0)$, $(\alpha,0)$, $(1,1)$, $(0,1)$ for all $0<\alpha<1$.
If condition~\eqref{EQ:boundedcond} is violated but $(1 - k)^n/(1 - \lambda)$ remains bounded, then the $n$-th and 
higher order approximations are still asymptotic. In this case, however, it is much more efficient to employ  expansion~\eqref{EQ:EIsecondexpansion} from Theorem~\ref{THM:secondexpansion}.
\end{remark}
\begin{remark}
Decomposition \eqref{EQ:EIfirstexpansion} together with the inequalities \eqref{EQ:remainderbound1} clearly imply a sequence of asymptotically precise two-sided bounds for the second incomplete EI in the form 
\begin{multline*}
E_N(\lambda, k)-\frac{(1/2)_N(1/2)_{N+1}(1-k^2)^N}{2N!(N+1)!}f_{N}(\lambda, k) \le E(\lambda, k)
\\
\le E_N(\lambda,k)-\frac{(1/2)_N (1/2)_{N+1}(1-k^2)^N}{2N!(N+1)!}f_{N+1}(\lambda, k),
\end{multline*}
where $E_N(\lambda, k)=E(\lambda, k)-R_N(\lambda, k)$ in  \eqref{EQ:EIfirstexpansion} and $f_{N}(\lambda, k)$ is defined in \eqref{EQ:positivefunction}.  Furthermore, we can get a substantially more precise approximation than  $E_N(\lambda,k)$ by substituting $R_N(\lambda, k)$ in \eqref{EQ:EIfirstexpansion}  with its approximate value read off  \eqref{EQ:remainderbound1}. For instance, we can take 
\begin{equation}\label{eq:Eapprox1}
\hat{E}_N(\lambda, k)=E_N(\lambda, k)-\frac{(1/2)_N (1/2)_{N+1}(1-k^2)^N}{2N!(N+1)!}f_{N+\varepsilon}    
\end{equation}
with some $\varepsilon\in(0,1)$. Then it follows from~\eqref{EQ:remainderbound1} that the corresponding remainder $\hat{R}_N(\lambda, k) := E(\lambda, k) - \hat{E}_N(\lambda, k)$ 
satisfies 
\begin{multline} \label{EQ:refinedremainderbound1}
\frac{(1/2)_N (1/2)_{N+1} (1 - k^2)^N}{2 N! (N+ 1)!} \left(f_{N+\varepsilon}(\lambda, k) - f_{N}(\lambda, k)\right) < \hat{R}_{N}(\lambda, k)  \\
< \frac{(1/2)_N (1/2)_{N+1} (1 - k^2)^N}{2 N! (N + 1)!} \left(f_{N+\varepsilon}(\lambda, k)  - f_{N+ 1}(\lambda, k)\right).
\end{multline}
We use the value $\varepsilon=1/2$ in Section~5 for numerical experiments.  It seems to be an interesting open problem to find the value of $\varepsilon=\varepsilon(a)$ giving the best approximant \eqref{eq:Eapprox1} in the uniform norm on the subset of the unit square of the form $(a,1)\times(a,1)$.     
\end{remark}

\begin{proof}
By Theorem~\ref{THM:first}, for $\lambda$ and $k$ satisfying~\eqref{EQ:cond1}, the incomplete EI of the 
second kind has the following expansion
\begin{align*}
E(\lambda, k) & = \lambda  \sqrt{1 + \frac{(1 - k^2) \lambda^2}{1 - \lambda^2}} 
 + \ln\left(\frac{1 - \lambda}{1 + \lambda}\right) \sum_{j = 1}^\infty \frac{(-1/2)_j (1/2)_j}{j! (j - 1)!} (1 - k^2)^j  \\
& \quad + \frac{1}{\lambda} \sum_{j = 1}^\infty \sum_{n = 0}^{j -1} (-1)^{j + n - 1} \frac{(-1/2)_j (1/2 - j)_n}{j! (1 - j)_n} (1-k^2)^j \left( \frac{\lambda^2}{1 - \lambda^2}\right)^{j - n}. 
\end{align*}
Using the rearrangement rule
\begin{equation} \label{EQ:exchangerule}
\sum_{j = N + 1}^\infty \sum_{n = N}^{j - 1} b_{n, j} = \sum_{n = N}^\infty \sum_{j = n + 1}^\infty b_{n, j}
\end{equation}
for $N = 0$, we get
\begin{align}
E(\lambda, k) & = \lambda  \sqrt{1 + \frac{(1 - k^2) \lambda^2}{1 - \lambda^2}} 
 + \ln\left(\frac{1 - \lambda}{1 + \lambda}\right) \sum_{j = 1}^\infty \frac{(-1/2)_j (1/2)_j}{j! (j - 1)!} (1 - k^2)^j  \nonumber \\
& \quad + \frac{1}{\lambda} \sum_{n = 0}^\infty \sum_{j = n + 1}^\infty (-1)^{j + n - 1} \frac{(-1/2)_j (1/2 - j)_n}{j! (1 - j)_n} (1-k^2)^j \left( \frac{\lambda^2}{1 - \lambda^2}\right)^{j - n}.  \label{EQ:exchangesum}
\end{align}
Taking the $N$-th partial sum of the first series, the $(N-1)$-th partial sum of the second series in~\eqref{EQ:exchangesum}, and 
writing the rest as a remainder, we get~\eqref{EQ:EIfirstexpansion} by Lemma~\ref{LEM:ct}. Thereby, the reminder is given by
\begin{align}
R_N(\lambda, k) & = \ln\left(\frac{1 - \lambda}{1 + \lambda}\right) \sum_{j = N + 1}^\infty \frac{(-1/2)_j (1/2)_j}{j! (j - 1)!} (1 - k^2)^j  \nonumber \\
& \quad + \frac{1}{\lambda} \sum_{n = N}^\infty \sum_{j = n + 1}^\infty (-1)^{j + n - 1} \frac{(-1/2)_j (1/2 - j)_n}{j! (1 - j)_n} (1-k^2)^j \left( \frac{\lambda^2}{1 - \lambda^2}\right)^{j - n}. \label{EQ:firstremainder1}
\end{align}
Applying the rule~\eqref{EQ:exchangerule} to the second term in~\eqref{EQ:firstremainder1}, we have
\begin{align}
R_N(\lambda, k) & = \ln\left(\frac{1-\lambda}{1+\lambda}\right)\sum_{j=N+1}^\infty \frac{(-1/2)_j (1/2)_j}{j! (j - 1)!} (1 - k^2)^j  \nonumber \\
& \quad - \frac{1}{\lambda} \sum_{j=N+1}^\infty (-1)^j\frac{(-1/2)_j}{j!} \left[\frac{(1- k^2) \lambda^2}{1 - \lambda^2} \right]^j \sum_{n = N}^{j - 1}  \frac{(1/2 - j)_n}{(1 - j)_n} \left( \frac{1 - \lambda^2}{- \lambda^2}\right)^n. \label{EQ:firstremainder2}
\end{align}
By the argument from~\cite[page 197]{Dima2007}, we have that
\begin{multline}
\sum_{n = N}^{j - 1}  \frac{(1/2 - j)_n}{(1 - j)_n} \left( \frac{1 - \lambda^2}{- \lambda^2}\right)^n  = \frac{(1/2)_j}{(j - 1)!} \left(\frac{1-\lambda^2}{-\lambda^2}\right)^j \Bigg[\lambda\ln\left(\frac{1-\lambda}{1+\lambda}\right)\\
+ \int_{0}^{\frac{\lambda^2}{1 - \lambda^2}} \frac{(-u)^{j - N} du}{(1 + u) \sqrt{1 - u (1 - \lambda^2)/\lambda^2}} \Bigg]. \label{EQ:DimaSum}
\end{multline}
Substituting~\eqref{EQ:DimaSum} into~\eqref{EQ:firstremainder2} and interchanging the summation and integration, we get
\begin{equation} 
R_N(\lambda, k) = \frac{(-1)^{N+1}}{\lambda} \int_{0}^{\frac{\lambda^2}{1 - \lambda^2}} \frac{\left(1 - u(1 - \lambda^2)/\lambda^2\right)^{-1/2}}{u^N (1 + u)} du \sum_{j = N + 1}^\infty \frac{(-1/2)_j (1/2)_j}{j! (j - 1)!} [-(1 - k^2) u]^j. \label{EQ:firstremainder3}
\end{equation}
Using the {\tt Mathematica} command ``{\tt Sum}", we find that 
\begin{multline} 
 \sum_{j = N + 1}^\infty \frac{(-1/2)_j (1/2)_j}{j!(j-1)!} (-x)^j = \frac{(-1/2)_{N + 1} (1/2)_{N + 1}}{N! (N + 1)!} (-x)^{N + 1} \\ 
\cdot {}_3F_2(1, N+1/2, N+3/2; N+1, N+2; -x). \label{EQ:firstremainderSum}
\end{multline}
Substituting~\eqref{EQ:firstremainderSum} into~\eqref{EQ:firstremainder3}, we have
\begin{multline}
R_N(\lambda, k) = \frac{(1 - k^2)^{N + 1} (-1/2)_{N+1} (1/2)_{N+1}}{\lambda N! (N+1)!} \\
\cdot  \int_{0}^{\frac{\lambda^2}{1 - \lambda^2}} \frac{{}_3F_2\left(1, N + 1/2, N + 3/2; N + 1, N + 2; - (1 - k^2)u \right) u du}{(1 + u) \left(1 - u (1 - \lambda^2)/\lambda^2 \right)^{1/2}}. \label{EQ:firstremainder4}
\end{multline}
Applying the following inequality~\cite[Theorem 2]{Dima2009}
\begin{multline*}
\frac{1}{1 + \frac{(N + 1/2) (N + 3/2)}{(N + 1) (N + 2)} x} < {}_3F_2(1, N + 1/2, N + 3/2; N + 1, N + 2; - x)  \\
< \frac{1}{1 + \frac{(N - 1/2) (N + 1/2)}{N (N + 1)} x} \quad \text{ for each} \quad x > 0
\end{multline*}
to~\eqref{EQ:firstremainder4}, we see that $R_N(\lambda, k) < 0$ and 
\begin{equation*}
\frac{(1/2)_N (1/2)_{N+1} (1 - k^2)^N}{2 N! (N+ 1)!} g(\theta_{N+1}, \lambda, k) < - R_{N}(\lambda, k)  < \frac{(1/2)_N (1/2)_{N+1} (1 - k^2)^N}{2 N! (N + 1)!} g(\theta_{N}, \lambda, k),
\end{equation*}
where $\theta_N$ is defined in \eqref{EQ:thetaN} and 
\begin{multline}\label{EQ:positivefunction2}
g(\theta, \lambda, k) = \frac{1 - k^2}{\lambda} \int_{0}^{\frac{\lambda^2}{1-\lambda^2}}\frac{udu}{[1+(1-k^2)u/\theta](1+u) (1-u(1-\lambda^2)/\lambda^2)^{1/2}} \\
= \frac{\theta}{\theta-(1- k^2)}\Bigg[\frac{\theta}{\sqrt{\lambda^2 + \beta\theta}}\cdot
\ln\frac{\sqrt{\lambda^2 + \beta\theta} + \lambda}{\sqrt{\lambda^2+\beta\theta}-\lambda}-(1-k^2)\ln\left(\frac{1+\lambda}{1-\lambda}\right) \Bigg]
\end{multline} 
with $\beta$ from \eqref{eq:beta-defined}. Set 
$$
f_N(\lambda, k) = g(\theta_N, \lambda, k). 
$$
Then we get the error bound~\eqref{EQ:remainderbound1}. 
The boundedness of $f_N(\lambda, k)$ under condition~\eqref{EQ:boundedcond} can be deduced from the second representation 
of $g(\theta, \lambda, k)$ in~\eqref{EQ:positivefunction2} while the monotonicity of $f_N(\lambda, k)$ in $N$ follows from the first  representation of $g(\theta, \lambda, k)$ in~\eqref{EQ:positivefunction2}. 

We will now remove condition \eqref{EQ:cond1} and show that  the expansion~\eqref{EQ:EIfirstexpansion} is true  in the entire unit square in the $(\lambda, k)$ plane.  Indeed, as we remarked in the introduction the function $E(\lambda,k)$ is holomorphic in the bi-disk $|\lambda|<1$, $|k|<1$ of $\mathbb{C}^2$.  The same is true for the terms on the right hand side preceding $R_N(\lambda,k)$. Indeed, the hypergeometric representation \eqref{eq:sn4F3} implies that $s_n\left(\frac{(1-k^2)\lambda^2}{1-\lambda^2}\right)$ has singularity at
$$
\frac{(1-k^2)\lambda^2}{1-\lambda^2}=-1~\Leftrightarrow~ k^2\lambda^2=1,
$$
so that $s_n$ in \eqref{EQ:EIfirstexpansion} is also holomorphic in the bi-disk.  Finally, the apparent singularity at $\lambda=0$ is removable because of $(-x)^{n+1}$ in front of ${}_4F_{3}$ in~\eqref{eq:sn4F3}.  On the other hand, 
the remainder $R_N(\lambda,k)$ is holomorphic in the same bi-disk due to representation~\eqref{EQ:firstremainder4}.  Hence, the difference of $E(\lambda,k)$ and the terms on the right hand side of \eqref{EQ:EIfirstexpansion} preceding $R_N(\lambda,k)$  coincide with  $R_N(\lambda,k)$ under condition \eqref{EQ:cond1}.  The principle of analytic continuation implies that they coincide in the entire bi-disk. 
\end{proof}

\begin{remark}
By the above proof, we see that the expansion~\eqref{EQ:EIfirstexpansion} also holds for complex $\lambda$ and $k$ in the bi-disk. 
\end{remark}

By~\eqref{EQ:EIfirstexpansion}, we obtain the following first order approximation for the incomplete EI of the second kind (see details in Section~5)
$$
E_1(\lambda, k)= (\lambda-1/\lambda)\sqrt{1 + \lambda^2/\beta} - \frac{1-k^2}{4}\ln\left(\frac{1-\lambda}{1+\lambda}\right) +1/\lambda,
$$
which is not only the correct asymptotic approximation for $E(\lambda, k)$ as $k \rightarrow 1$ but also as $\lambda \to0$ 
including the case when both $\lambda, k \to 0$ along any curve inside the unit square. 
In fact, it is straightforward to see that
\[
E_1(\lambda, k) = \lambda + \frac{1}{24}(7 - 10 k^2 + 3 k^4) \lambda^3 + \mathcal{O}(\lambda^5), \quad \lambda \to 0.
\]
On the other hand, we have 
\[
E(\lambda, k)=\lambda+\frac{1}{6}(1 - k^2) \lambda^3 + \mathcal{O}(\lambda^5), \quad \lambda \to 0.
\]
Thus, we get
\[
E(\lambda, k) - E_1(\lambda, k) = \mathcal{O}(\lambda^3), \quad \lambda \to 0.
\]
In other words, $E_1(\lambda, k)$ is indeed an approximation for two sides of the unit square (including endpoints), \ie, 
the side $\lambda = 0, k \in [0, 1]$ and the other side $k = 1, \lambda \in [0, 1]$. The same phenomenon happens for 
higher order approximations but the asymptotic order for $\lambda \to 0$ does not increase with $N$.

Approximation \eqref{eq:Eapprox1}  with $\varepsilon=1/2$ takes the form:
$$
\hat{E}_1(\lambda,k)=E_1(\lambda, k)-\frac{3(1 - k^2)}{32} f_{3/2}(\lambda, k),
$$
$$
f_{3/2}(\lambda, k)=\frac{15}{15-8(1-k^2)} 
\Bigg[\frac{15}{8\sqrt{\lambda^2 + 15\beta/8}} \ln\frac{\sqrt{\lambda^2+15\beta/8}+\lambda}{\sqrt{\lambda^2 +15\beta/8}-\lambda}
-(1-k^2)\ln\frac{1+\lambda}{1-\lambda}\Bigg].
$$
\section{The second asymptotic expansion} \label{SEC:sec}

For $n \in \bN$ and $(\lambda, k) \in [0, 1] \times [0, 1)$, set 
\begin{align}
A_n(x) & = \sum_{j = 0}^\infty {n + j \choose j} \frac{(-1)^j (1/2)_j}{(2 (n + j) + 1) j!} x^j, \nonumber \\
B_n(x) & =  \sum_{j = 0}^\infty {n + j \choose j} \frac{(-1)^j (1/2)_j}{(2 (n + j) + 3) j!} x^j, \nonumber \\
C_n(x) & = A_n(x) + \frac{(1 - \lambda^2) k^2}{1 - k^2} B_n(x). \label{EQ:sumab}
\end{align}
We give two representations for $A_n(x)$, $B_n(x)$ in the following lemma. The first one in terms of elementary functions serves as an ingredient of our second expansion.  The second one in terms of the Gauss hypergeometric function is needed for the error estimation in Theorem~\ref{THM:secondexpansion} below.

\begin{lemma} \label{LEM:secondexpansion}
For $n \in \bN$ and $(\lambda, k) \in [0, 1] \times [0, 1)$, we have the following identities for $A_n(x)$ and $B_n(x)$:
\begin{subequations}\label{EQ:ABformulas} 
\begin{align}
A_n(x)  & {=} \frac{1}{n!} D_x^n \left[ (-1)^n \frac{(1/2)_n}{n! \sqrt{x}} \ln(\sqrt{1 + x} {+} \sqrt{x}) {+} \frac{\sqrt{1 + x}}{2 n x} 
\sum_{j = 0}^{n - 1} (-1)^j \frac{(1/2 - n)_j}{(1 - n)_j} x^{n - j} \right] \label{EQ:explicitformulaA} \\
 & = \frac{1}{2 x^{n + 1/2}} \int_0^x \frac{t^{n - 1/2}}{\sqrt{1 + t}} {}_2F_1\left(-n, 1/2; 1; \frac{t}{1 + t} \right)dt,\label{EQ:errorestimationA}
\end{align}
where $D_x$ is the usual differentiation with respect to $x$ and the second term in the first bracket equals zero when $n = 0$; 
and
\begin{align}
B_n(x)  & {=} \frac{1}{n!} D_x^n \Bigg[\frac{(-1)^{n+1}(1/2)_{n + 1}}{(n + 1)! x^{3/2}} \ln(\sqrt{1 + x} + \sqrt{x})  + \frac{\sqrt{1 + x}}{2 (n + 1) x^2} 
\sum_{j = 0}^{n}  \frac{(-1/2 - n)_j}{(-1)^j(- n)_j} x^{n + 1 - j} \Bigg] \label{EQ:explicitformulaB} \\ 
& =\frac{1}{2 x^{n + 3/2}}  \int_0^x \frac{t^{n + 1/2}}{\sqrt{1 + t}} {}_2F_1\left(-n, 1/2; 1; \frac{t}{1 + t} \right) dt.\label{EQ:errorestimationB}
\end{align} 
\end{subequations}
\end{lemma}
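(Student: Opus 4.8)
The plan is to obtain all four identities from a single integral formula for $A_n(x)$ (and its analogue for $B_n(x)$). First I would write the denominator as $\frac{1}{2(n+j)+1}=\int_0^1 s^{2(n+j)}\,ds$ and use $\binom{n+j}{j}=(n+1)_j/j!$. Interchanging summation and integration, the inner series becomes a Gauss function, since with $c=1$ one has ${}_2F_1(n+1,1/2;1;w)=\sum_{j\ge0}\frac{(n+1)_j(1/2)_j}{(j!)^2}w^j$, giving
\begin{equation*}
A_n(x)=\int_0^1 s^{2n}\,{}_2F_1\!\left(n+1,1/2;1;-xs^2\right)ds .
\end{equation*}
The same computation with $\frac{1}{2(n+j)+3}=\int_0^1 s^{2(n+j)+2}\,ds$ yields $B_n(x)=\int_0^1 s^{2n+2}\,{}_2F_1(n+1,1/2;1;-xs^2)\,ds$. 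These two formulas serve as the common starting point.

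To derive the integral representations~\eqref{EQ:errorestimationA} and~\eqref{EQ:errorestimationB}, I would substitute $t=xs^2$, which turns the formula for $A_n$ into $\frac{1}{2x^{n+1/2}}\int_0^x t^{n-1/2}\,{}_2F_1(n+1,1/2;1;-t)\,dt$, and that for $B_n$ into the same expression with $t^{n+1/2}$ and prefactor $\frac{1}{2x^{n+3/2}}$. It then remains to apply Pfaff's transformation \cite[15.8.1]{NIST} in the form ${}_2F_1(a,b;c;z)=(1-z)^{-b}{}_2F_1(c-a,b;c;z/(z-1))$ with $a=n+1$, $b=1/2$, $c=1$, $z=-t$. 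This gives exactly ${}_2F_1(n+1,1/2;1;-t)=(1+t)^{-1/2}\,{}_2F_1(-n,1/2;1;t/(1+t))$, producing~\eqref{EQ:errorestimationA} and~\eqref{EQ:errorestimationB}.

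For the elementary representations~\eqref{EQ:explicitformulaA} and~\eqref{EQ:explicitformulaB}, the key tool is the standard differentiation formula \cite[15.5.4]{NIST}, which for $a=1$ reads ${}_2F_1(n+1,1/2;1;w)=\frac{1}{n!}D_w^n\big[w^n(1-w)^{-1/2}\big]$ and lets me trade the raised parameter $n+1$ for an $n$-th derivative. Setting $w=-xs^2$ and using $D_w^n=(-s^2)^{-n}D_x^n$ on functions of $-xs^2$, I can pull the differentiation and the $x$-independent factors out of the $s$-integral to get
\begin{equation*}
A_n(x)=\frac{1}{n!}D_x^n\!\left[x^n\int_0^1\frac{s^{2n}}{\sqrt{1+xs^2}}\,ds\right].
\end{equation*}
After the substitution $u=s\sqrt{x}$ the bracket equals $\frac{1}{\sqrt{x}}\int_0^{\sqrt{x}}u^{2n}(1+u^2)^{-1/2}\,du$, and the reduction $\int u^{2m}(1+u^2)^{-1/2}\,du=\frac{u^{2m-1}\sqrt{1+u^2}}{2m}-\frac{2m-1}{2m}\int u^{2m-2}(1+u^2)^{-1/2}\,du$ produces the logarithm $\ln(\sqrt{1+x}+\sqrt{x})$ with coefficient $(-1)^n(1/2)_n/n!$ plus a finite $\sqrt{1+x}$-multiple of a polynomial in $x$. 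The analogue for $B_n$ starts from $\int_0^{\sqrt{x}}u^{2n+2}(1+u^2)^{-1/2}\,du$.

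I expect the main obstacle to be bookkeeping rather than conceptual: verifying that iterating the reduction formula reproduces \emph{exactly} the Pochhammer-ratio coefficients $(1/2-n)_j/(1-n)_j$ in~\eqref{EQ:explicitformulaA} (respectively $(-1/2-n)_j/(-n)_j$ in~\eqref{EQ:explicitformulaB}). This reduces to matching the solution of the one-step recursion for the coefficients against the claimed closed ratio, which I would confirm by a short induction on $j$, or equivalently by identifying the reduced integral with a terminating Gauss series. The interchange of $D_x^n$ with $\int_0^1 ds$ and the term-by-term integration are justified by uniform convergence on compact subsets of $(0,\infty)$ and present no difficulty; the case $n=0$ collapses to $A_0(x)=x^{-1/2}\ln(\sqrt{1+x}+\sqrt{x})$, consistent with the convention that the finite sum is empty.
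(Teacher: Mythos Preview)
Your argument is correct. The route, however, differs from the paper's in organization. The paper does not prove the $A_n$ identities at all---it simply cites \cite[Lemma~4]{Dima2007}---and for $B_n$ it first observes, by termwise integration of $(1+t)^{-1/2}$, that
\[
B_n(x)=\frac{1}{2n!}D_x^n\!\left[x^{-3/2}\int_0^x \frac{t^{n+1/2}}{\sqrt{1+t}}\,dt\right],
\]
then branches: the substitution $y^2=t/(1+t)$ turns the inner integral into $2\int_0^{\sqrt{x/(1+x)}} y^{2(n+1)}(1-y^2)^{-(n+2)}\,dy$, whose closed form is read off \cite[Lemma~1]{Dima2007} to give \eqref{EQ:explicitformulaB}; and the substitution $t=ux$ followed by differentiating $x^n(1+ux)^{-1/2}$ under the integral sign gives \eqref{EQ:errorestimationB}. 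Your approach instead produces the single representation $A_n(x)=\int_0^1 s^{2n}\,{}_2F_1(n+1,1/2;1;-xs^2)\,ds$ (and its $B_n$ analogue) and then branches via Pfaff for \eqref{EQ:errorestimationA}--\eqref{EQ:errorestimationB} and via \cite[15.5.4]{NIST} plus the reduction formula for \eqref{EQ:explicitformulaA}--\eqref{EQ:explicitformulaB}. The two routes are essentially dual---both hinge on the identity $D_x^n[x^n(1+ux)^{-1/2}]=n!(1+ux)^{-1/2}{}_2F_1(-n,1/2;1;\tfrac{ux}{1+ux})$, which you reach by 15.5.4 followed by Pfaff and the paper uses in the opposite direction. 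What your version buys is a self-contained, symmetric treatment of $A_n$ and $B_n$ that does not rely on the companion paper; what the paper's version buys is brevity through citation. The bookkeeping you flag (matching the iterated reduction against the Pochhammer ratios) is genuine but routine, and the paper sidesteps it entirely by invoking \cite[Lemma~1]{Dima2007}.
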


\begin{proof}
The identities \eqref{EQ:explicitformulaA} and \eqref{EQ:errorestimationA} for $A_n(x)$ are given in~\cite[Lemma 4]{Dima2007}. Hence, we only need to 
derive the expressions for $B_n(x)$. 

On account of $1/\sqrt{1 + x} = \sum_{j = 0}^\infty (-1)^j (1/2)_j/j! x^j$, we see that 
\begin{align}
B_n(x) & =  \sum_{j = 0}^\infty {n + j \choose j} \frac{(-1)^j (1/2)_j}{(2 (n + j) + 3) j!} x^j 
 = \frac{1}{2n!} D_x^n x^{-3/2} \int_0^x \frac{t^{n + 1/2}}{\sqrt{1 + t}} dt. \label{EQ:bint}
\end{align}
For the integral on the right side of~\eqref{EQ:bint}, we make a change of variables by $y^2 = t/(1 + t)$ and then get
\begin{equation} \label{EQ:bch1}
\int_0^x \frac{t^{n + 1/2}}{\sqrt{1 + t}} dt = 2 \int_{0}^{\sqrt{x/(1 + x)}} \frac{y^{2 (n + 1)} dy}{(1 - y^2)^{n + 2}}. 
\end{equation}
By~\eqref{EQ:bint},~\eqref{EQ:bch1}, and~\cite[Lemma 1 and 4]{Dima2007}, we derive~\eqref{EQ:explicitformulaB}. 

Alternatively, we set $t=ux$ on the right hand side of~\eqref{EQ:bint} and get
\[
\int_0^x \frac{t^{n + 1/2}}{\sqrt{1 + t}} dt = x^{n + 3/2} \int_{0}^1 \frac{u^{n + 1/2} du}{\sqrt{1 + u x}}. 
\] 
Thus, we have
\begin{align*}
B_n(x) & = \frac{1}{2n!} D_x^n x^{-3/2} \int_0^x \frac{t^{n + 1/2}}{\sqrt{1 + t}} dt  = \frac{1}{2n!} D_x^n x^n \int_{0}^1 \frac{u^{n + 1/2} du}{\sqrt{1 + u x}} \\
& =  \frac{1}{2n!} \int_{0}^1 u^{n + 1/2} \left[ D_x^n \frac{x^n}{\sqrt{1 + u x}} \right] du = \frac{1}{2} \int_{0}^1 \frac{u^{n + 1/2}}{\sqrt{1 + u x}} {}_2F_1\left(-n, 1/2; 1; \frac{u x}{1 + u x}\right) du.
\end{align*}
Finally, substituting back $t=ux$, we obtain
\begin{equation} \label{EQ:bint2}
B_n(x) = \frac{1}{2 x^{n + 3/2}} \int_{0}^x \frac{t^{n + 1/2}}{\sqrt{1 + t}} {}_2F_1\left(-n, 1/2; 1; \frac{t}{1 + t}\right) dt.
\end{equation}
Therefore, it follows from~\eqref{EQ:bint2} and~\cite[Lemma 1 and 4]{Dima2007} that~\eqref{EQ:errorestimationB} holds.
\end{proof}

\begin{theorem} \label{THM:secondexpansion}
For each $(\lambda, k) \in (0, 1) \times (0, 1)$ and an integer $N \geq 1$, the second incomplete EI can be decomposed as follows:
\begin{equation} \label{EQ:EIsecondexpansion}
E(\lambda,k)=E(k)-\sqrt{(1-\lambda^2)(1-k^2)}\sum_{n=0}^{N-1} (1-\lambda^2)^n C_n\left(\beta\right) + \tilde{R}_N(\lambda, k), 
\end{equation}
where $\beta=(1-\lambda^2)/(1-k^2)$ as before and the function $C_n(x)$ is defined in \eqref{EQ:sumab} and computed in Lemma~\ref{LEM:secondexpansion}. The remainder is negative and satisfies the following 
inequalities:
\begin{multline} 
\frac{(1-\lambda^2)^{N+1}(\lambda^2+\beta+N^{-1})(1/2)_N}{2\beta^2 (N+1)!}\
\Big[\sqrt{\beta\left(1+\beta\right)}-\mathrm{arcsinh}\!\left(\sqrt{\beta}\right) \Big]
\\
\le-\tilde{R}_N(\lambda, k)\le\frac{(1-\lambda^2)^{N+1}(\lambda^2+\beta+N^{-1})}{2(N+1)\lambda^{2}\sqrt{\beta(1+\beta)}}.
\label{EQ:secondEIremainderbound}
\end{multline}
\end{theorem}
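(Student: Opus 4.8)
The plan is to obtain \eqref{EQ:EIsecondexpansion} by rearranging the single‑index series underlying Theorem~\ref{THM:sec} into the double‑index form that produces the functions $C_n$, and then to estimate the tail $\tilde R_N$ through the integral representations of $A_n,B_n$ in Lemma~\ref{LEM:secondexpansion}. Letting $N\to\infty$ in \eqref{EQ:expansion2} gives, for $\beta k^2<1$, the full series $E(\lambda,k)=E(k)-\sqrt{(1-\lambda^2)(1-k^2)}\sum_{m=0}^\infty\big(\tfrac1{2m+1}+\tfrac{\beta k^2}{2m+3}\big)(1-\lambda^2)^mF_m\big((1-k^2)^{-1}\big)$, where $F_m(x)={}_2F_1(-m,1/2;1;x)$. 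Since this ${}_2F_1$ terminates, I would insert $F_m\big((1-k^2)^{-1}\big)=\sum_{i=0}^m(-1)^i\binom{m}{i}\tfrac{(1/2)_i}{i!}(1-k^2)^{-i}$ and reindex the resulting double sum by $m=n+j,\ i=j$; using $(1-\lambda^2)^j(1-k^2)^{-j}=\beta^j$ the coefficient of $(1-\lambda^2)^n$ collapses to $A_n(\beta)+\beta k^2 B_n(\beta)=C_n(\beta)$ in the notation of \eqref{EQ:sumab}. Keeping the first $N$ terms yields \eqref{EQ:EIsecondexpansion} with $\tilde R_N=-\sqrt{(1-\lambda^2)(1-k^2)}\sum_{n=N}^\infty(1-\lambda^2)^nC_n(\beta)$, which is negative because every $C_n(\beta)>0$. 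The rearrangement is legitimate wherever the double series converges absolutely (a nonempty open subset of the square, e.g. near $\lambda=1$); to reach all of $(0,1)\times(0,1)$ I would invoke the identity theorem, noting that the $n$‑series converges locally uniformly on the whole square since $(1-\lambda^2)^nC_n(\beta)=\bigO((1-\lambda^2)^n)$, exactly as in the continuation argument of Theorem~\ref{THM:firstexpansion}.

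The engine for both bounds is a single master identity obtained by combining \eqref{EQ:errorestimationA} and \eqref{EQ:errorestimationB}: since $\beta k^2B_n(\beta)=\tfrac{k^2}{2\beta^{n+1/2}}\int_0^\beta\tfrac{t^{n+1/2}}{\sqrt{1+t}}F_n\,dt$, adding to $A_n(\beta)$ and using $(1-\lambda^2)^n\beta^{-n}=(1-k^2)^n$ gives
\[
(1-\lambda^2)^nC_n(\beta)=\frac{(1-k^2)^n}{2\sqrt{\beta}}\int_0^\beta\frac{t^{n-1/2}(1+k^2t)}{\sqrt{1+t}}\,F_n\!\left(\frac{t}{1+t}\right)dt .
\]
Everything then reduces to summing this over $n\ge N$, with $F_n(t/(1+t))$ controlled by Lemma~\ref{LEM:sec}(ii) because $t/(1+t)\in(0,1)$.

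For the \emph{upper} bound I would use $F_n\le1$, push the summation inside by Tonelli, sum the geometric series $\sum_{n\ge N}((1-k^2)t)^n=\tfrac{((1-k^2)t)^N}{1-(1-k^2)t}$, bound $\tfrac{1}{1-(1-k^2)t}\le\tfrac1{\lambda^2}$ (valid since $(1-k^2)t\le(1-k^2)\beta=1-\lambda^2$), and apply the pointwise inequality $\tfrac1{\sqrt{1+t}}\le\sqrt{\tfrac{\beta}{1+\beta}}\,t^{-1/2}$ for $0<t\le\beta$ (which is equivalent to $t\le\beta$). The remaining elementary integral equals $\tfrac{\beta^N}{N}+\tfrac{k^2\beta^{N+1}}{N+1}$, and the algebraic identity $\tfrac1N+\tfrac{k^2\beta}{N+1}=\tfrac{\lambda^2+\beta+N^{-1}}{N+1}$—which holds \emph{precisely} because $\beta(1-k^2)=1-\lambda^2$—together with $(1-k^2)^{N+1}\beta^{N+1}=(1-\lambda^2)^{N+1}$ reproduces the right‑hand inequality in \eqref{EQ:secondEIremainderbound} exactly.

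For the \emph{lower} bound, since all terms are positive I would discard all but the $n=N$ term and use $F_N\ge F_N(1)=(1/2)_N/N!$, leaving $\int_0^\beta\tfrac{t^{N-1/2}(1+k^2t)}{\sqrt{1+t}}dt$ to be bounded below by $\tfrac{\beta^{N-1}(\lambda^2+\beta+N^{-1})}{N+1}\int_0^\beta\sqrt{\tfrac{t}{1+t}}\,dt$. Writing the two monomial pieces as $t^{N-1}g(t)$ and $t^Ng(t)$ with $g(t)=\sqrt{t/(1+t)}$ increasing, the needed inequalities $\int_0^\beta t^pg\ge\tfrac{\beta^p}{p+1}\int_0^\beta g$ ($p=N-1,N$) are instances of Chebyshev's integral inequality for the similarly ordered functions $t^p$ and $g$; finally $\int_0^\beta\sqrt{t/(1+t)}\,dt=\sqrt{\beta(1+\beta)}-\mathrm{arcsinh}\sqrt{\beta}$ via $t=\sinh^2\phi$, and the same identity $\tfrac1N+\tfrac{k^2\beta}{N+1}=\tfrac{\lambda^2+\beta+N^{-1}}{N+1}$ reassembles the constants into the left‑hand inequality of \eqref{EQ:secondEIremainderbound}. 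I expect this lower bound to be the delicate step: the naive estimate $\tfrac1{\sqrt{1+t}}\ge\tfrac1{\sqrt{1+\beta}}$ produces the wrong denominators $N\pm\tfrac12$ and no $\mathrm{arcsinh}$ term, so recognizing that Chebyshev's inequality (applied after retaining only the first term of the tail) is what yields exactly the stated constant is the crux of the argument.
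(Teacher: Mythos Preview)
Your argument is correct and structurally identical to the paper's: same rearrangement of the Theorem~\ref{THM:sec} series via $m=n+j$, same use of the integral formulas \eqref{EQ:errorestimationA}--\eqref{EQ:errorestimationB} combined with the two-sided bounds on $F_n$ from Lemma~\ref{LEM:sec}(ii), same Chebyshev step for the lower bound, and the same analytic-continuation finish.

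Two tactical differences are worth recording. For the upper bound the paper substitutes $y^2=t/(1+t)$ and invokes the inequality \eqref{EQ:dimainequality}; your pointwise bound $1/\sqrt{1+t}\le\sqrt{\beta/(1+\beta)}\,t^{-1/2}$ on $(0,\beta]$ is exactly equivalent after that substitution but avoids both the change of variable and the external citation. For the lower bound the paper keeps the full tail $n\ge N$, sums it to ${}_2F_1(N+1/2,1;N+1;(1-k^2)t)$, applies the Ponnusamy--Vuorinen inequality ${}_2F_1\ge 1/\sqrt{1-(1-k^2)t}$, and then immediately discards this via $1/\sqrt{1-(1-k^2)t}\ge 1$; the net effect of these three steps is precisely your single move of retaining only the $n=N$ term and using $F_N\ge(1/2)_N/N!$. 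So your route is the same argument with two redundant steps excised, and in particular shows that the Ponnusamy--Vuorinen citation is not actually needed for \eqref{EQ:secondEIremainderbound}.
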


\begin{remark}
The error bound~\eqref{EQ:secondEIremainderbound} implies that expansion ~\eqref{EQ:EIsecondexpansion} is convergent for any fixed $(\lambda, k)$ in the open unit square and convergence is uniform on compact subsets. Furthermore, it is asymptotic as $\lambda\to1$ along any curve lying entirely inside the unit square, including those with the endpoint $(1,1)$. 
\end{remark}
\begin{remark}
Decomposition \eqref{EQ:EIsecondexpansion} together with the inequalities \eqref{EQ:secondEIremainderbound} clearly implies a sequence of asymptotically precise two-sided bounds for the second incomplete EI in the form 
\begin{multline*}
\tilde{E}_N(\lambda, k)-\frac{(1-\lambda^2)^{N+1}(\lambda^2+\beta+N^{-1})}{2(N+1)\lambda^{2}\sqrt{\beta(1+\beta)}}
\le E(\lambda, k)
\\
\le \tilde{E}_N(\lambda, k)-\frac{(1-\lambda^2)^{N+1}(\lambda^2+\beta+N^{-1})(1/2)_N}{2\beta^2 (N+1)!}\
\Big[\sqrt{\beta\left(1+\beta\right)}-\mathrm{arcsinh}\!\left(\sqrt{\beta}\right) \Big],
\end{multline*}
where $\tilde{E}_N(\lambda, k)=E(\lambda, k)-\tilde{R}_N(\lambda, k)$ in  \eqref{EQ:EIsecondexpansion}. Furthermore, we can get a substantially more precise approximation than  $\tilde{E}_N(\lambda,k)$ by substituting $\tilde{R}_N(\lambda, k)$ in \eqref{EQ:EIsecondexpansion}  with its approximate value. For instance,  we can take the weighted average 
\begin{multline} \label{eq:Eapprox2}
\bar{E}_N(\lambda, k) = \tilde{E}_N(\lambda, k)-\frac{(1 - \lambda^2)^{N + 1} (\lambda^2 + \beta + N^{-1})}{2 (N + 1)} \left(\frac{\delta}{\lambda^2 \sqrt{\beta (1 + \beta)}} \right. \\
\left. + (1-\delta)\frac{(1/2)_N \left[\sqrt{\beta (1 + \beta)}  -\mathrm{arcsinh}\!\left(\sqrt{\beta}\right) \right]}{\beta^2 N!} \right),
\end{multline}
with $0<\delta<1$.  Numerical experiments in section~5 give the optimal value of $\delta=\frac{67}{187}$. Set
\begin{equation}   \label{EQ:errordiff2}
\bar{\Delta}_N  = \frac{(1 - \lambda^2)^{N + 1} (\lambda^2 + \beta + N^{-1})}{2 (N + 1) E(\lambda, k)} \left(\frac{1}{\lambda^2 \sqrt{\beta (1 + \beta)}} -  \frac{(1/2)_N \left[\sqrt{\beta (1 + \beta)}  -\mathrm{arcsinh}\!\left(\sqrt{\beta}\right) \right]}{\beta^2 N!} \right).
\end{equation}
Then it follows from~\eqref{EQ:secondEIremainderbound} that the corresponding remainder $\bar{R}_N(\lambda, k) := E(\lambda, k) - \bar{E}_N(\lambda, k)$ satisfies 
\begin{equation} \label{EQ:refinedsecondEIremainderbound}
-(1-\delta)\bar{\Delta}_N \cdot E(\lambda, k)  \leq \bar{R}_N(\lambda, k) \leq \delta\bar{\Delta}_N\cdot E(\lambda, k).
\end{equation}
\end{remark}

\begin{proof}
By Theorem~\ref{THM:sec} for $(\lambda, k)$ satisfying~\eqref{EQ:cond2}, we have
\begin{align}
E(\lambda, k) & = \!E(k) 
-\sqrt{(1 - \lambda^2) (1 - k^2)} 
\sum_{m = 0}^{\infty} (1 - \lambda^2)^m \!\left(\frac{1}{2m+1}+\frac{\beta k^2}{2m+3}\right)  {_2}F_1\!\left(\!\!\!\left.\begin{array}{c}-m, 1/2\\ 1\end{array}\right| \frac{1}{1 - k^2}\right) \nonumber \\
& \quad 
=E(k)-\sum_{m = 0}^\infty(1-\lambda^2)^{m+1/2}\left(\frac{1}{2m+1} +\frac{\beta k^2}{2m+3}\right)\sum_{j = 0}^m {m \choose j} \frac{(-1)^j (1/2)_j}{j! (1 - k^2)^{j - 1/2}}. \label{EQ:secondEIex1}
\end{align}
Applying the rule
\[
\sum_{m = 0}^\infty \sum_{j = 0}^m b_{m, j} = \sum_{n = 0}^\infty \sum_{j = 0}^\infty b_{n + j, j}
\]
to~\eqref{EQ:secondEIex1}, we get 
\begin{align}
E(\lambda, k) & {=} E(k) {-} \sqrt{(1{-}\lambda^2)(1 {-} k^2)}  \sum_{n = 0}^\infty (1 {-} \lambda^2)^n 
\nonumber\\ 
& \quad \ \times\sum_{j = 0}^\infty 
\beta^j {n + j \choose j} \frac{(-1)^j (1/2)_j}{j!} 
 \left(\frac{1}{2(n+j)+1}+\frac{\beta{k^2}}{2 (n + j) + 3}\right), \label{EQ:secondEIex2}
\end{align}
which, in view of~\eqref{EQ:sumab}, can be split as follows:
\[
E(\lambda, k) = E(k) - \sqrt{(1 {-} \lambda^2) (1 {-} k^2)} \sum_{n = 0}^{N - 1} (1 - \lambda^2)^{n} C_n\left(\beta\right) + \tilde{R}_N(\lambda, k),
\]
where the remainder is 
\begin{equation} \label{EQ:secondEIremainder}
\tilde{R}_N(\lambda, k) =  - \sqrt{(1 {-} \lambda^2) (1 {-} k^2)} \sum_{n = N}^{\infty} (1 - \lambda^2)^{n} C_n\left(\beta\right).
\end{equation}
In~\eqref{EQ:secondEIex2}, the inner sum does not converge unless $k<\lambda$. Nevertheless, it follows from~\eqref{EQ:ABformulas} that the function $C_n(x)$ is an elementary function with no singularities in the unit square $(\lambda,k)\in[0,1]\times[0,1)$, which furnishes analytic continuation of the inner sum to this domain. Next, we prove that the outer sum of~\eqref{EQ:secondEIex2} converges for each $(\lambda, k)$ in the unit square. For this purpose,  substituting~\eqref{EQ:errorestimationA} and \eqref{EQ:errorestimationB} into~\eqref{EQ:secondEIremainder} and applying the upper bound from item~(ii) of Lemma~\ref{LEM:sec}, we get
\begin{align}
-\tilde{R}_N(\lambda, k) & = \frac{1}{2} \sum_{n = N}^\infty (1 - k^2)^{n + 1} \int_0^{\frac{1 - \lambda^2}{1 - k^2}}  \frac{t^n}{\sqrt{t (1 + t)}} {}_2F_1\left(-n, 1/2; 1; \frac{t}{1 + t} \right) dt  \nonumber \\
\quad \ +  \frac{k^2}{2} & \sum_{n = N}^\infty (1 - k^2)^{n + 1} \int_0^{\frac{1 - \lambda^2}{1 - k^2}}
 t^n \sqrt{\frac{t}{1 + t}} {}_2F_1\left(-n, 1/2; 1; \frac{t}{1 + t} \right) dt  \nonumber \\
 \leq  \frac{1}{2}  &\sum_{n = N}^\infty(1 - k^2)^{n + 1} \int_0^{\frac{1 - \lambda^2}{1 - k^2}}  \frac{t^n dt}{\sqrt{t (1 + t)}}+ \frac{k^2}{2} \sum_{n = N}^\infty (1 - k^2)^{n + 1} \int_0^{\frac{1 - \lambda^2}{1 - k^2}}
 t^n \sqrt{\frac{t}{1 + t}} dt  \nonumber \\
= \frac{1 - k^2}{2} &\int_0^{\frac{1 - \lambda^2}{1 - k^2}}  \frac{dt}{\sqrt{t (1 + t)}} \sum_{n = N}^\infty [(1 - k^2) t]^n 
 +  \frac{(1 - k^2) k^2}{2}  \int_0^{\frac{1 - \lambda^2}{1 - k^2}}
 \sqrt{\frac{t}{1 + t}} dt \sum_{n = N}^\infty  [(1 - k^2) t]^n  \nonumber \\
= \frac{(1 - k^2)^{N + 1}}{2} &\int_0^{\frac{1 - \lambda^2}{1 - k^2}}  \frac{t^N dt}{\sqrt{t (1 + t)}[1 - (1 - k^2) t]} 
 +  \frac{(1 - k^2)^{N + 1} k^2}{2}  \int_0^{\frac{1 - \lambda^2}{1 - k^2}}
 \frac{t^{N + 1/2} dt}{\sqrt{1 + t}[1 - (1 - k^2) t]}  \nonumber \\
 \leq \quad  & \frac{(1 - k^2)^{N + 1}}{2 \lambda^2} \int_0^{\frac{1 - \lambda^2}{1 - k^2}}  \frac{t^N dt}{\sqrt{t (1 + t)}} 
+  \frac{(1 - k^2)^{N + 1} k^2}{2 \lambda^2}  \int_0^{\frac{1 - \lambda^2}{1 - k^2}}
 \frac{t^{N + 1/2} dt}{\sqrt{1 + t}}  \nonumber \\
= \frac{(1 - k^2)^{N + 1}}{\lambda^2} &\int_0^{\sqrt{\frac{1 - \lambda^2}{2 - k^2 - \lambda^2}}}  \frac{y^{2 N} dy}{(1 - y^2)^{N + 1}}
+ \frac{(1 - k^2)^{N + 1} k^2}{\lambda^2} \int_0^{\sqrt{\frac{1 - \lambda^2}{2 - k^2 - \lambda^2}}}  \frac{y^{2 (N + 1)} dy}{(1 - y^2)^{N + 2}}, \label{EQ:last}
\end{align}
where the last equality is derived from~\eqref{EQ:bch1}. On the other hand, it follows from~\cite[formula 16]{Dima2007} that 
\begin{equation} \label{EQ:dimainequality}
\int_0^x \frac{t^{2 a} dt}{(1 - t^2)^{a + 1}} \leq \frac{x^{2 a + 1}}{2 a (1 - x^2)^a} \quad \text{ for } \quad x \in (0, 1), \quad a > 0.
\end{equation}
By setting $x=(1-\lambda^2)/(2-k^2-\lambda^2)$ and $a=N,N+1$ in~\eqref{EQ:last}, we obtain the upper bound in \eqref{EQ:secondEIremainderbound} by an application of~\eqref{EQ:dimainequality}.

To derive a lower bound, we apply the lower bound from item (ii) of Lemma~\ref{LEM:sec} to representation \eqref{EQ:secondEIremainder} with $C_n$ expressed from Lemma~\ref{LEM:secondexpansion} to get:
\begin{align*}
- \tilde{R}_{N}(\lambda, k) & \geq  \frac{1}{2} \sum_{n = N}^\infty (1 - k^2)^{n + 1} \int_0^{\frac{1 - \lambda^2}{1 - k^2}}  \frac{t^n}{\sqrt{t (1 + t)}}  \frac{(1/2)_n}{n!} dt \\
& \quad \  +  \frac{k^2}{2} \sum_{n = N}^\infty (1 - k^2)^{n + 1} \int_0^{\frac{1 - \lambda^2}{1 - k^2}}
\frac{t^{n + 1/2}}{\sqrt{1 + t}} \frac{(1/2)_n}{n!} dt \\
& = \frac{1 - k^2}{2}  \int_0^{\frac{1 - \lambda^2}{1 - k^2}}   \frac{dt}{\sqrt{t (1 + t)}} \sum_{n = N}^\infty  \frac{(1/2)_n}{n!} 
[(1 - k^2) t]^{n}  \\
& \quad \  +   \frac{(1 - k^2) k^2}{2}  \int_0^{\frac{1 - \lambda^2}{1 - k^2}}  \sqrt{\frac{t}{1 + t}} dt \sum_{n = N}^\infty  \frac{(1/2)_n}{n!} 
[(1 - k^2) t]^{n} \\
& {=} \frac{(1 {-} k^2)^{N + 1} (1/2)_N}{2 N!}  \int_0^{\frac{1 {-} \lambda^2}{1 {-} k^2}}\frac{t^N}{\sqrt{t (1 {+} t)}} {}_2F_1(N {+} 1/2, 1; N {+} 1; (1 {-} k^2) t) dt\\
& {+} \frac{k^2 (1 {-} k^2)^{N + 1}  (1/2)_N}{2 N!}  \int_0^{\frac{1 {-} \lambda^2}{1 {-} k^2}}\frac{t^{N + 1/2}}{\sqrt{1 {+} t}} {}_2F_1(N {+} 1/2, 1; N {+} 1; (1 {-} k^2)t)dt.
\end{align*}
Moreover, it follows from~\cite[Theorem 1.10]{Ponnusamy1997} that 
\[
{}_2F_1(N + 1/2, 1; N + 1; y) \geq \frac{1}{\sqrt{1 - y}} \quad \text{ for } \quad y \in (0, 1).
\]
Therefore, we have
\begin{align*}
- \tilde{R}_{N}(\lambda, k) & \geq  \frac{(1 - k^2)^{N + 1} (1/2)_N}{2 N!}  \int_0^{\frac{1 - \lambda^2}{1 - k^2}}\frac{t^{N - 1/2}}{\sqrt{(1 + t) (1 - (1 - k^2)t)}} dt\\
& \quad \ + \frac{k^2 (1 - k^2)^{N + 1} (1/2)_N}{2 N!}  \int_0^{\frac{1 - \lambda^2}{1 - k^2}}\frac{t^{N + 1/2}}{\sqrt{(1 + t) (1 - (1 - k^2)t)}} dt \\
 & \geq  \frac{(1 - k^2)^{N + 1} (1/2)_N}{2 N!}  \int_0^{\frac{1 - \lambda^2}{1 - k^2}} t^{N - 1} \sqrt{\frac{t}{1 + t}} dt\\
& \quad \ + \frac{k^2 (1 - k^2)^{N + 1} (1/2)_N}{2 N!}  \int_0^{\frac{1 - \lambda^2}{1 - k^2}}t^{N} \sqrt{\frac{t}{1 + t}} dt.
\end{align*}
Using the Chebyshev inequality~\cite[formula IX(1.2)]{Fink1993}, we get
\begin{multline*}
- \tilde{R}_{N}(\lambda, k) \ge \frac{(1 - k^2)^{N + 2} (1/2)_N}{2 (1 - \lambda^2) N!}  \int_0^{\frac{1 - \lambda^2}{1 - k^2}} t^{N - 1} dt   \int_0^{\frac{1 - \lambda^2}{1 - k^2}} \sqrt{\frac{t}{1 + t}} dt\\
+ \frac{k^2 (1 - k^2)^{N + 2} (1/2)_N}{2 (1 - \lambda^2) N!}  \int_0^{\frac{1 - \lambda^2}{1 - k^2}} t^{N} dt \int_0^{\frac{1 - \lambda^2}{1 - k^2}} \sqrt{\frac{t}{1 + t}} dt \\
= \frac{(1 - k^2)^{N+2} (1/2)_N}{2 (1-\lambda^2) N!}\Bigg[\frac{1}{N}\left(\frac{1-\lambda^2}{1-k^2}\right)^{N}+\frac{k^2}{N+1}\left(\frac{1-\lambda^2}{1-k^2}\right)^{N+1}\Bigg]\int_0^{\frac{1-\lambda^2}{1-k^2}} \sqrt{\frac{t}{1 + t}}dt
\\
 = \frac{(1 - k^2)(1-\lambda^2)^{N-1}(1/2)_N}{2N(N+1)!}\big[1-k^2+N(1-k^2\lambda^2)\big]
\Bigg[\sqrt{\frac{1-\lambda^2}{1-k^2}\left(1+\frac{1-\lambda^2}{1-k^2}\right)}-\mathrm{arcsinh}\!\left(\sqrt{\frac{1-\lambda^2}{1-k^2}}\right) \Bigg]
\\
 = \frac{(1-\lambda^2)^{N+1}(1/2)_N}{2\beta^2 (N+1)!}\left(\lambda^2+\beta+N^{-1}\right)\
\Big[\sqrt{\beta\left(1+\beta\right)}-\mathrm{arcsinh}\!\left(\sqrt{\beta}\right) \Big].
\end{multline*}
As the expansion~\eqref{EQ:EIsecondexpansion} holds for $(\lambda, k)$ satisfying~\eqref{EQ:cond2} and both sides are  holomorphic for each $(\lambda, k)$ in the bi-disk $| \lambda | < 1, | k | < 1$ of $\bC^2$. It follows from  the principle of analytic continuation that the expansion~\eqref{EQ:EIsecondexpansion} holds in the entire bi-disk.
\end{proof}

\begin{remark}
By the above proof, we see that expansion~\eqref{EQ:EIsecondexpansion} also holds for complex $\lambda$ and $k$ in the bi-disk. 
\end{remark}

\section{Numerical experiments} \label{SEC:num}

In this section we will give several numerical examples of computations with the asymptotic expansions derived in Section~\ref{SEC:first} and~\ref{SEC:sec}.  First, we consider expansion~\eqref{EQ:EIfirstexpansion}. By~\eqref{EQ:s0} and~\eqref{EQ:s1}, we have ($\beta=(1-\lambda^2)/(1-k^2)$):
\begin{align*}
s_0\left(\frac{\lambda^2}{\beta}\right) & = \sqrt{1 + \lambda^2/\beta} - 1,  \\ 
s_1\left(\frac{\lambda^2}{\beta}\right) & = \frac{1}{2}\sqrt{1+\lambda^2/\beta}-\frac{\lambda^2}{4\beta}\Big[2\ln\frac{1+\sqrt{1+\lambda^2/\beta}}{2}+1\Big]-\frac{1}{2}.
\end{align*}
Therefore, the first and the second order approximations are:
\begin{align}
E_1(\lambda, k) & = (\lambda-1/\lambda)\sqrt{1 + \lambda^2/\beta} - \frac{1-k^2}{4}\ln\!\left(\frac{1-\lambda}{1+\lambda}\right) +1/\lambda,  \label{EQ:firstapprox1} \\ 
E_2(\lambda, k) & = E_1(\lambda, k) - \frac{3}{32} (1 - k^2)^2 \ln\!\left(\frac{1 - \lambda}{1 + \lambda}\right) 
+\frac{1-\lambda^2}{2\lambda^3}\left[\sqrt{1+\lambda^2/\beta}-1\right] \nonumber \\
& \quad \ -\frac{1-k^2}{4\lambda}\Big[2\ln\frac{1+\sqrt{1+\lambda^2/\beta}}{2}+1\Big].\label{EQ:secondapprox1}
\end{align}
The refined approximations \eqref{eq:Eapprox1} with $\varepsilon=1/2$ take the form 
\begin{equation} \label{EQ:refinedfirstapprox}
\hat{E}_1(\lambda, k)=E_1(\lambda, k)-\frac{3(1-k^2)}{32}f_{3/2},
\end{equation}
\begin{equation} \label{EQ:refinedsecondapprox}
\hat{E}_2(\lambda, k)=E_2(\lambda, k)-\frac{15(1-k^2)^2}{256}f_{5/2}, 
\end{equation}
where $f_{N}=f_N(\lambda, k)$ is defined in \eqref{EQ:positivefunction}.

Denote by $\Delta_N$ the range for the relative error defined as the difference between the upper and the lower bounds in~\eqref{EQ:remainderbound1} divided by~$E(\lambda, k)$:
\begin{equation} \label{EQ:errordiff1}
\Delta_N = \frac{(1/2)_N (1/2)_{N+1} (1 - k^2)^N}{2 N! (N+ 1)! E(\lambda, k)} (f_N(\lambda, k) - f_{N+1}(\lambda, k)).
\end{equation}

Approximation~\eqref{EQ:refinedfirstapprox} together with~\eqref{EQ:refinedremainderbound1} places the value of $E(\lambda, k)$ within an interval of 
length $\Delta_1 \cdot E(\lambda, k)$, while~\eqref{EQ:refinedsecondapprox} places $E(\lambda, k)$ within an interval of length $\Delta_2 \cdot E(\lambda, k)$. The results of numerical computations are presented in Table~\ref{TABLE:1}. The exact values of $E(\lambda, k)$ shown in the tables below 
are computed using {\tt Mathematica} with the required number of precise digits guaranteed. 
\begin{table}[h!]
\small
\begin{tabular}{l l l l l l l l}
 \hline
 $\lambda$ & $k$ & $E(\lambda, k)$  & First order & First order & Relative  &  Relative &  Relative error  \\
& &  &  approx.~\eqref{EQ:firstapprox1} &  approx.~\eqref{EQ:refinedfirstapprox} & error $e_1$ &  error $\hat{e}_1$   &  range $\Delta_1$  \\
 \hline
.8 & .8 & .8501 & .8714 & .8496 & $-.02504$ & $.6011 \times 10^{-3}$  & .002446   \\
.9 & .9 & .9504 & .9669 & .9500 &  $-.01734$ & $.4455 \times 10^{-3}$  & .001972  \\
.95 & .95 & .9900 & 1.0003 & .9897 & $-.01044$ & $.2712 \times 10^{-3}$ & .001250   \\
.99 & .99 & 1.0056 & $1.0081$ & 1.0055 & $-.002531$ & $.6475 \times 10^{-4}$  & $.3072 \times 10^{-3}$   \\
.95 & .99 & .9586 & .9591 & .9586 & $-.5674 \times 10^{-3}$ & $.5651\times 10^{-7}$  & $.1743 \times 10^{-4}$ \\ 
.99 & .999 & .9916 & .9916 & .9916 &  $-.3417 \times 10^{-4}$ & $.1902 \times 10^{-8}$   & $.5445 \times 10^{-8}$  \\
\hline
\end{tabular}
\begin{tabular}{l l l l l l l l}
 $\lambda$ & $k$ & $E(\lambda, k)$  & Second order & Second order & Relative & Relative & Relative error  \\
 & & & approx.~\eqref{EQ:secondapprox1} & approx.~\eqref{EQ:refinedsecondapprox} & error $e_2$ & error $\hat{e}_2$  &  range $\Delta_2$   \\
\hline
.8 & .8 & .8501 & .8547 & .8501 & $-.005413$ & $.4975 \times 10^{-4}$  & $.1990 \times 10^{-3}$ \\
.9 & .9 & .9504 & .9523 & .9504 &  $-.001966$ & $.1837 \times 10^{-4}$   & $.8270 \times 10^{-4}$  \\
.95 & .95 & .9900 & .9906 & .9900 & $-.6056 \times 10^{-3}$ & $.5978 \times 10^{-7}$  & $.2661 \times 10^{-4}$  \\
.99 & .99 & 1.0056 & 1.0056 & 1.0056 & $-.2995 \times 10^{-4}$ & $.2667 \times 10^{-8}$  & $.1324 \times 10^{-7}$ \\
.95 & .99 & .9586 & .9586 & .9586 & $-.6968 \times 10^{-7}$ & $.3090 \times 10^{-10}$ &  $.8609 \times 10^{-9}$  \\ 
.99 & .999 & .9916 & .9916 & .9916 & $-.4240 \times 10^{-9}$ & $.1081 \times 10^{-11}$  & $.2810 \times 10^{-11}$  \\
\hline
\end{tabular}
\caption{{\small Numerical examples for approximations~\eqref{EQ:firstapprox1},~\eqref{EQ:refinedfirstapprox}, ~\eqref{EQ:secondapprox1} and~\eqref{EQ:refinedsecondapprox} derived from expansion~\eqref{eq:Eapprox1}. The numbers $e_i$ and $\hat{e}_i$ are the  relative errors $(E(\lambda, k) - {E}_i(\lambda, k))/E(\lambda, k)$ and $( E(\lambda, k) - \hat{E}_i(\lambda, k))/E(\lambda, k)$, respectively, $i = 1, 2$. 
The numbers~$\Delta_1, \Delta_2$ are given in~\eqref{EQ:errordiff1}.}}
\label{TABLE:1}
\end{table}

Next, we consider expansion~\eqref{EQ:EIsecondexpansion}. By~\eqref{EQ:explicitformulaA} and 
\eqref{EQ:explicitformulaB}  we have
\begin{align}
C_0(x) & = \frac{1}{\sqrt{x}} \ln(\sqrt{1 + x} + \sqrt{x}) + \frac{\beta k^2}{2 x^{3/2}}\left(\sqrt{x (1 + x)} - \ln(\sqrt{1 + x} + \sqrt{x})\right), \nonumber \\
C_1(x) & = \frac{1}{4 x}\left(\frac{1}{\sqrt{x}} \ln(\sqrt{1 + x} + \sqrt{x}) - \frac{1 - x}{\sqrt{1 + x}} \right) \nonumber \\ 
& \quad \ + \frac{\beta k^2}{16 x^{2}} \left( \frac{-9}{\sqrt{x}} \ln(\sqrt{1 + x} + \sqrt{x}) + \frac{9 + 3 x + 2 x^2}{\sqrt{1 + x}}\right).  \label{EQ:c1}
\end{align}
Hence, the first and the second order approximation obtained from~\eqref{EQ:EIsecondexpansion} are:
\begin{align}
\tilde{E}_1(\lambda, k) & = E(k) - \frac{2 - 3 k^2 + k^4}{2} \ln\left(\sqrt{1 + \beta} + \sqrt{\beta}\right) 
- \frac{k^2 (1 - k^2)}{2} \sqrt{\beta (1 + \beta)},   \label{EQ:firstapprox2} \\
\tilde{E}_2(\lambda, k) & = \tilde{E}_1(\lambda, k) - (1 - \lambda^2)^{3/2} \sqrt{1 + k} \cdot C_1(\beta), \label{EQ:secondapprox2}
\end{align}
where  the function $C_1(x)$ is given in~\eqref{EQ:c1}. The refined approximations~\eqref{eq:Eapprox2} take the form 
\begin{align} 
\bar{E}_1(\lambda, k) & = \tilde{E}_1(\lambda, k)-\frac{(1 - \lambda^2)^{2} (\lambda^2 + \beta + 1)}{4} \left(\frac{67}{187} \frac{1}{\lambda^2 \sqrt{\beta (1 + \beta)}} \right. \nonumber \\
& \left. \quad \ + \frac{60}{187} \frac{\sqrt{\beta (1 + \beta)}  -\mathrm{arcsinh}\!\left(\sqrt{\beta}\right)}{\beta^2} \right),  \label{EQ:refinedfirstapprox2} \\
\bar{E}_2(\lambda, k) & = \tilde{E}_2(\lambda, k)-\frac{(1 - \lambda^2)^{3} (\lambda^2 + \beta + 1/2)}{6} \left(\frac{67}{187} \frac{1}{\lambda^2 \sqrt{\beta (1 + \beta)}} \right. \nonumber \\
& \left. \quad \ + \frac{45}{187} \frac{\sqrt{\beta (1 + \beta)}  -\mathrm{arcsinh}\!\left(\sqrt{\beta}\right)}{\beta^2} \right). \label{EQ:refinedsecondapprox2}
\end{align}
Let $\bar{\Delta}_N$ be the number given in~\eqref{EQ:errordiff2}. Then approximation~\eqref{EQ:refinedfirstapprox2} together with~\eqref{EQ:refinedsecondEIremainderbound} puts $E(\lambda, k)$ within an interval of 
length $\bar{\Delta}_1 \cdot E(\lambda, k)$, while~\eqref{EQ:refinedsecondapprox2} puts $E(\lambda, k)$ within an interval of length $\bar{\Delta}_2 \cdot E(\lambda, k)$. 
The results of numerical computation are presented in Table~\ref{TABLE:2}.
\begin{table}[h!]
\small
\begin{tabular}{l l l l l l l l}
 \hline
 $\lambda$ & $k$ & $E(\lambda, k)$ & First order & First order & Relative & Relative & Relative error \\
& & & approx.~\eqref{EQ:firstapprox2} &  approx.~\eqref{EQ:refinedfirstapprox2} & error $\tilde{e}_1$ &  error $\bar{e}_1$ &  range $\bar{\Delta}_1$  \\
 \hline
.8 & .8 & .8501 & .8976 & .8491 & $-.05586$ & $.001162$  &  .08435    \\
.9 & .9 & .9504 & .9532 & .9509 & $-.01343$ & $-.5311 \times 10^{-3}$  &  .01618  \\
.95 & .95 & .9900 & .9933 & .9909 & $-.003344$ & $-.9083 \times 10^{-3}$  &  .003602  \\
.99 & .95 & 1.0572 & 1.0574 & 1.0572 & $-.2771 \times 10^{-3}$ & $-.3481 \times 10^{-4}$  &  $.2784 \times 10^{-3}$   \\ 
.99 & .99 & 1.0056 & 1.0057 & 1.0056 & $-.1355 \times 10^{-3}$ & $-.3648 \times 10^{-4}$  &  $.1335 \times 10^{-3}$   \\
.999 & .99 & 1.0220 & 1.0220 & 1.0220 & $-.4114 \times 10^{-7}$ & $-.5547 \times 10^{-8}$  &  $.4085 \times 10^{-7}$ \\
\hline
\end{tabular}
\begin{tabular}{l l l l l l l l}
$\lambda$ & $k$ & $E(\lambda, k)$ & Second order & Second order& Relative & Relative & Relative error  \\
& & & approx.~\eqref{EQ:secondapprox2} & approx.~\eqref{EQ:refinedsecondapprox2} & error $\tilde{e}_2$ & error $\bar{e}_2$  &  range $\bar{\Delta}_2$ \\
\hline
.8 & .8 & .8501 & .8589 & .8501 & $-.01028$ & $-.2378 \times 10^{-4}$  & $.01771$  \\
.9 & .9 & .9504 & .9516 & .9505 & $-.001286$ & $-.6168 \times 10^{-4}$  &  $.001870$ \\ 
.95 & .95 & .9900 & .9901 & .9900 & $-.1633 \times 10^{-3}$ & $-.1004 \times 10^{-4}$  & $.2188 \times 10^{-3}$  \\
.99 & .95 & 1.0572 & 1.0572 & 1.0572 & $-.3110 \times 10^{-7}$ & $-.8657 \times 10^{-8}$   & $.3212 \times 10^{-7}$  \\ 
.99 & .99 & 1.0056 & 1.0056 & 1.0056 & $-.1345 \times 10^{-7}$ & $-.9252 \times 10^{-9}$ &  $.1689 \times 10^{-7}$  \\ 
.999 & .99 & 1.0220 & 1.0220 & 1.0220 & $-.4774 \times 10^{-10}$ & $-.1502 \times 10^{-10}$  &  $.4679 \times 10^{-10}$   \\ 
\hline
\end{tabular}
\caption{{\small Numerical examples for approximations~\eqref{EQ:firstapprox2},~\eqref{EQ:refinedfirstapprox2},~\eqref{EQ:secondapprox2} and~\eqref{EQ:refinedsecondapprox2} derived from expansion~\eqref{eq:Eapprox2}. The numbers $\tilde{e}_i$ and $\bar{e}_i$ are the  relative errors $(E(\lambda, k) - \tilde{E}_i(\lambda, k))/E(\lambda, k)$ and $( E(\lambda, k) - \bar{E}_i(\lambda, k))/E(\lambda, k)$, respectively, $i = 1, 2$. 
The numbers~$\bar{\Delta}_1, \bar{\Delta}_2$ are given in~\eqref{EQ:errordiff2}.}}
\label{TABLE:2}
\end{table}

We conclude by comparing the above results with the asymptotic approximation~\eqref{EQ:CGexpansion1} from~\cite{Lopez2000, Carlson1994} with the error bounds ~\eqref{EQ:Lbound1} and~\eqref{EQ:CGbound1}, respectively. We denote $\Delta_1^{*}$ to be 
the difference between the upper and the lower bound in~\eqref{EQ:Lbound1} divided by $E(\lambda, k)$,\ie,
\begin{equation} \label{EQ:errordiff3}
\Delta_1^{*} = \text{(rhs of~\eqref{EQ:Lbound1} } -  \text{ lhr of~\eqref{EQ:Lbound1})}/E(\lambda, k).
\end{equation} 
Similarly, Let $\Delta_2^{*}$ be 
the difference between the upper and the lower bound in~\eqref{EQ:CGbound1} divided by $E(\lambda, k)$,\ie,
\begin{equation} \label{EQ:errordiff4}
\Delta_2^{*} = \text{(rhs of~\eqref{EQ:CGbound1} } -  \text{ lhr of~\eqref{EQ:CGbound1})}/E(\lambda, k).
\end{equation} 
The results of numerical computation are given in Table~\ref{TABLE:3}. 
\begin{table}[h!]
\small
\centering
\begin{tabular}{l l l l l l l l l l}
 \hline
 $\lambda$ & $k$ & $E(\lambda, k)$ & First order & Relative  & Relative error & Relative error  \\
& & &  approx.~\eqref{EQ:CGexpansion1} &  error & range $\Delta_1^{*}$ & range $\Delta_2^{*}$   \\
 \hline
.8 & .8 & .8501 & .8343 & $.01864$  & .78055 & .23538 \\
.9 & .9 & .9504 & 1.0127 & $-.06551$  & .66727 & .17444\\
.95 & .95 & .9900 & 1.0704 & $-.08121$  & .44780 & .12025  \\
.99 & .95 & 1.0572 & 1.1178 & $-.05736$  & .27546 & .105 \\ 
.99 & .99 & 1.0056 & 1.0472 & $-.04136$  & .15715 & .03994\\
.999 & .99 & 1.0220 & 1.0434 & $-.02088$  & .07386 & .03581 \\ 
.999 & .999 & 1.0017 & 1.0094 & $-.007712$  & .02327 &  .006137 \\
\hline
\end{tabular}
\caption{{\small Numerical examples for the approximation~\eqref{EQ:CGexpansion1} ($=$\eqref{eq:Lopez1}) due to Carlson-Gustafson and L\'{o}pez. The fifth column equals the relative error $r_1$ in~\eqref{EQ:CGexpansion1} ($=$\eqref{eq:Lopez1}) divided by $E(\lambda, k)$. The numbers $\Delta_1^{*}$ and $\Delta_2^{*}$ are given in~\eqref{EQ:errordiff3} and~\eqref{EQ:errordiff4}, respectively.}}
\label{TABLE:3}
\end{table}

\paragraph{Acknowledgements} We thank the anonymous referees for a number of useful remarks which improve the exposition of the paper. 

\bibliographystyle{abbrv}
\def\cprime{$'$}


\section*{Appendix. Approximations of Carlson-Gustafson and L\'{o}pez}
In this appendix we will convert the asymptotic approximations for symmetric elliptic integrals $R_F$ and $R_D$ due to L\'{o}pez \cite{Lopez2000}
and Carlson-Gustafson \cite{Carlson1994} and their  error bounds into the corresponding results for the Legendre's second incomplete EI $E(\lambda,k)$ defined in \eqref{eq:E-defined}.  As 
$$
E(\lambda, k) = \lambda R_F(1-\lambda^2, 1-k^2\lambda^2, 1) - \frac{1}{3}k^2\lambda^3 R_D(1-\lambda^2, 1-k^2 \lambda^2, 1)  
$$
by \cite[(4.2)]{Carlson1979}, we need the asymptotic approximations for $R_F(a,b,1)$ and $R_D(a,b,1)$ as $a,b\to0$.  In view of the easily verifiable relations
$$
R_F(x,y,z)=z^{-1/2}R_F(x/z,y/z,1),~~~~R_D(x,y,z)=z^{-3/2}R_D(x/z,y/z,1),
$$
it suffices to use the asymptotics of $R_F(x,y,z)$ and $R_D(x,y,z)$ as $z\to\infty$ while $x$ and $y$ remain fixed.  The first approximations from \cite[(3.1)]{Lopez2000} after simple rearrangement are given by  (under the assumption $0\le x<y\le z$)
$$
R_F(x/z,y/z,1)=\ln\left(\frac{2}{\sqrt{x/z}+\sqrt{y/z}}\right)+(\psi(1)-\psi(1/2))/2+R_1^F
$$
with the error bound \cite[(3.5)]{Lopez2000}  (we used the identity $\psi(2)=\psi(1)+1$):
$$
0<R_1^F\le \frac{|x+y|}{8z}\left(\ln\left(1+\frac{2z}{|x+y|}\right)+2\right),
$$
$$
0<R_1^F\le \frac{\max(2,|x+y|)}{\sqrt{z}}.
$$
Further, by \cite[(3.14)]{Lopez2000}  (also under the assumption $0\le x<y\le z$)
$$
R_D(x/z,y/z,1)=3\ln\left(\frac{2}{\sqrt{x/z}+\sqrt{y/z}}\right)+\frac{3}{2}\left(\psi(1)-\psi(3/2)\right)+R_1^D
$$
with
$$
0<R_1^D\le \frac{9|x+y|}{8z}\left(\ln\left(1+\frac{2z}{|x+y|}\right)+2\right),
$$
$$
0<R_1^D\le \frac{2\max(2,|x+y|)}{\sqrt{z}}.
$$
Hence, if we set $1-k^2\lambda^2=x/z$ and $1-\lambda^2=y/z$ we ensure that $0\le x<y$ and we arrive at  (in view of $\psi(3/2)=\psi(1/2)+2$):
$$
E(\lambda, k) = \lambda(1-k^2\lambda^2)\left[\ln\left(\frac{2}{\sqrt{1-k^2\lambda^2}+\sqrt{1-\lambda^2}}\right)
+\frac{1}{2}(\psi(1)-\psi(1/2))\right]+k^2\lambda^3+r_1.
$$
Finally, due to $\sum_{r=1}^{m} \psi(r/m)=m\psi(1)-m\ln(m)$ for $m=2$ we have $\psi(1)-\psi(1/2)=\ln(4)$ and  the approximation takes the form 
\begin{equation}\label{eq:Lopez1}
E(\lambda, k) = \lambda(1-k^2\lambda^2)\ln\left(\frac{4}{\sqrt{1-k^2\lambda^2}+\sqrt{1-\lambda^2}}\right)+k^2\lambda^3+r_1    
\end{equation}
reproduced in \eqref{EQ:CGexpansion1} with the error bounds  
\begin{equation} \label{EQ:Lbound1}
-\frac{3k^2\lambda^3 (2-\lambda^2-k^2\lambda^2)}{8}\left[\ln\frac{4-\lambda^2-k^2\lambda^2}{2-\lambda^2-k^2\lambda^2}+2\right] \le r_1
\le \frac{\lambda(2-\lambda^2-k^2\lambda^2)}{8}\left[\ln\frac{4-\lambda^2-k^2\lambda^2}{2-\lambda^2-k^2\lambda^2}+2\right].
\end{equation}

Combining \cite[(26)]{Carlson1994} and \cite[(34)]{Carlson1994} we again obtain the approximation \eqref{eq:Lopez1}. However, the bounds for remainder term differ from those above and take the form 
\begin{multline} \label{EQ:CGbound1}
\left(\frac{\lambda \sqrt{(1 - \lambda^2)(1 - k^2 \lambda^2)}}{2 \left(1 - \sqrt{(1 - \lambda^2)(1 - k^2 \lambda^2)}\right)} 
- \frac{3 k^2 \lambda^3 (2 - \lambda^2 (1 + k^2))}{2 \lambda^2 (1 + k^2)} \right) \ln \frac{2}{\sqrt{1 - \lambda^2} + \sqrt{1 - k^2 \lambda^2}} < r_1 \\
< \frac{\lambda (2 - \lambda^2 (1 + k^2))}{2 + \lambda^2 (1 + k^2)} \ln \frac{4}{\sqrt{1 - \lambda^2} + \sqrt{1 - k^2 \lambda^2}} - \frac{k^2 \lambda^3 \sqrt{(1 - \lambda^2) (1 - k^2 \lambda^2)}}{1 - \sqrt{(1 - \lambda^2) (1 - k^2 \lambda^2)}} \ln \frac{2}{\sqrt{1 - \lambda^2} + \sqrt{1 - k^2 \lambda^2}}.
\end{multline}
Table~\ref{TABLE:3} shows that these error bounds are more precise than \eqref{EQ:Lbound1} at the price of being substantially more complicated.


%
%
%
%
%
%
%
%


\end{document}